\let\OLDthebibliography\thebibliography
\renewcommand\thebibliography[1]{
  \OLDthebibliography{#1}
  \setlength{\parskip}{0pt}
  \setlength{\itemsep}{0pt plus 0.3ex}
}
\newtheorem{thm}{Theorem}[section]
\newtheorem{lemma}[thm]{Lemma}
\newtheorem{prop}[thm]{Proposition}
\newtheorem{cor}[thm]{Corollary}
\theoremstyle{definition}
\newtheorem{defn}[thm]{Definition}
\theoremstyle{remark}
\newtheorem{remark}[thm]{Remark}
\numberwithin{equation}{section}
\newcommand{\vv}[1]{\mathbf{#1}}
\newcommand*\wrapletters[1]{\wr@pletters#1\@nil}
\def\wr@pletters#1#2\@nil{#1\allowbreak\if&#2&\else\wr@pletters#2\@nil\fi}
\def\alp{{\alpha}} 
\def\bet{{\beta}}  
\def\del{{\delta}} \def\Del{{\Delta}}
\def\kap{{\kappa}}
\def\lam{{\lambda}} \def\Lam{{\Lambda}}
\def\ome{{\omega}}  
\def\eps{\varepsilon} \def \epsilon {{\varepsilon}}
\def \leq{\leqslant} \def \geq {\geqslant}
\def\le{\leqslant} \def\ge{\geqslant}
\def\d{{\,{\rm d}}}
\def \bH {\mathbb H}
\def \bN {\mathbb N}
\def \bR {\mathbb R}
\def \bZ {\mathbb Z}
\newcommand{\N}{{\Bbb N}}         
\newcommand{\I}{{\Bbb I}}
\newcommand{\R}{{\Bbb R}}        
\newcommand{\Z}{{\Bbb Z}}         
\def \bt {\mathbf t}
\def \bv {\mathbf v}
\def \bx {\mathbf x}
\def \bzero {\mathbf 0}
\def \fe {\mathfrak e}
\def \cB {\mathcal B}
\def \cC {\mathcal C}
\def \cF {\mathcal F}
\def \cI {\mathcal I}
\def \cK {\mathcal K}
\def \cL {\mathcal L}
\def \cM {\mathcal M}
\def \cP {\mathcal P}
\def \cS {\mathcal S}
\def \ord {\mathrm{ord}}
\def \det {\mathrm{det}}
\def \vol {\mathrm{vol}}
\def \supp {{\mathrm{supp}}}
\def \R {{\mathbb{R}}}
\def \Z {{\mathbb{Z}}}
\def \SL {{\mathrm{SL}}}
\def \SL {{\mathrm{SL}}}
\def \Aff {{\mathrm{Aff}}}
\def \dd {{\mathrm{d}}}
\def \SO {{\mathrm{SO}}}
\def \I {{\mathrm{I}}}
\begin{document}
\title[Effective equidistribution for Diophantine approximation]{Effective equidistribution for multiplicative Diophantine approximation on lines}
\author[Sam Chow]{Sam Chow}
\address{Sam Chow, Mathematics Institute, Zeeman Building, University of Warwick, Coventry CV4 7AL, UK}
\email{Sam.Chow@warwick.ac.uk}
\author[Lei Yang]{Lei Yang}
\address{Lei Yang, Institute for Advanced Study, Princeton, New Jersey, 08540, USA}
\email{lyang861028@gmail.com}
\subjclass[2010]{37A17, 22F30, 11J83, 11J13, 11H06}
\keywords{Littlewood conjecture, metric Diophantine approximation, Diophantine approximation on manifolds, homogeneous dynamics, Ratner's theorem, effective equidistribution, logarithm laws}
\thanks{}
\date{}
\dedicatory{Dedicated to the memory of Marina Ratner}
\begin{abstract}
Given any line in the plane, we strengthen the Littlewood conjecture by two logarithms for almost every point on the line, thereby generalising the fibre result of Beresnevich, Haynes, and Velani. To achieve this, we prove an effective asymptotic equidistribution result for one-parameter unipotent orbits in $\SL(3, \R)/\SL(3,\Z)$. We also provide a complementary convergence statement, by developing the structural theory of dual Bohr sets: at the cost of a slightly stronger Diophantine assumption, this sharpens a result of Kleinbock's from 2003. Finally, we refine the theory of logarithm laws in homogeneous spaces.
\end{abstract}
\maketitle

\section{Introduction}
\label{sec-intro}


\subsection{Multiplicative Diophantine approximation}
\label{subsec-intro-mda-lines}

The Littlewood conjecture (circa 1930) is one of the oldest problems in Diophantine approximation. It asserts that if $\alp, \bet \in \bR$ then
\begin{equation}  \label{littleor} \liminf_{n \to \infty} n \langle n \alpha\rangle  \langle n \beta\rangle = 0, \end{equation}
where for $x \in \bR$ we write $\displaystyle \langle x \rangle = \min_{n \in \bZ} |x - n|$. Despite some remarkable progress---see  \cite{BV, EKL, PV01, V07} and the references within---the Littlewood conjecture remains very much open. For example, we are unable to show that (\ref{littleor}) is valid for the pair $(\sqrt2,\sqrt3)$. On the other hand, from a measure-theoretic point of view Littlewood's conjecture is well-understood. Indeed, if we are only interested in the multiplicative approximation rate of a \emph{typical} point $(\alpha, \beta) \in \R^2$, with respect to the Lebesgue measure on $\R^2$,  a theorem of Gallagher \cite{Gallagher1962} implies the following statement.
\begin{thm}[Gallagher]  \label{reallygal}
For almost every $(\alpha, \beta) \in \R^2$, we have
\begin{equation}
\label{equ:typical-mda-property}
 \liminf_{n \to \infty} n (\log n)^2 \langle n \alpha\rangle  \langle n \beta\rangle = 0.
 \end{equation}
\end{thm}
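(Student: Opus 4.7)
The plan is to prove Gallagher's theorem via a divergence Borel--Cantelli argument applied to the multiplicative approximation sets
\[
A_n \;:=\; \Bigl\{ (\alp, \bet) \in [0,1]^2 : \langle n\alp \rangle \, \langle n\bet \rangle < \psi(n) \Bigr\}, \qquad \psi(n) \;:=\; \frac{\eps}{n (\log n)^2},
\]
with $\eps > 0$ arbitrary. By periodicity, and by intersecting over a countable sequence $\eps_k \to 0$, the theorem reduces to showing that $\limsup_n A_n$ has full Lebesgue measure in $[0,1]^2$ for each fixed $\eps > 0$.

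First I would compute $|A_n|$. Pulling back via the measure-preserving map $(\alp, \bet) \mapsto (n\alp, n\bet) \bmod \Z^2$ on the torus, $|A_n|$ equals the area of the hyperbolic cross $\{(u,v) \in [-1/2, 1/2]^2 : |uv| < \psi(n)\}$, a direct one-dimensional integral giving
\[
|A_n| \;\asymp\; \psi(n) \log (1/\psi(n)) \;\asymp\; \frac{1}{n \log n},
\]
so that $\sum_n |A_n| = +\infty$. This is what opens the door to a divergent Borel--Cantelli conclusion.

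Next---and this is the crux---I would establish the quasi-independence bound
\[
|A_m \cap A_n| \;\ll\; |A_m| \, |A_n| \qquad (m \ne n),
\]
up to an admissible error. Writing $A_n = \bigcup_{0 \le p,q < n} R_n(p,q)$, where $R_n(p,q)$ is the hyperbolic cross centred at $(p/n, q/n)$ cut out by the defining inequality, the intersection $A_m \cap A_n$ decomposes as a sum of volumes of $R_n(p,q) \cap R_m(p',q')$ over quadruples $(p, p', q, q')$. The diagonal contributions, in which the two centres essentially coincide ($pm \approx p'n$ and $qm \approx q'n$), reproduce the main $|A_m||A_n|$ term, while the off-diagonal ones are controlled by lattice-point estimates counting pairs with $|pm - p'n|$ small, organised by the greatest common divisor $d = (m,n)$.

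Finally I would invoke a divergence-type Borel--Cantelli lemma (Erd\H{o}s--Sprind\v{z}uk), which converts $\sum |A_n| = \infty$ together with quasi-independence into $|\limsup_n A_n| > 0$, and then upgrade this to full measure by a zero--one law coming from the invariance of the limsup set under rational translations in each variable. The main obstacle I expect is precisely the quasi-independence estimate: unlike in the one-dimensional Khintchine setting, the $A_n$ are unions of long-thin hyperbolic crosses rather than balls, and they are not translates of one another, so the off-diagonal analysis forces a careful decomposition of $(m,n)$ by their common divisor and separate treatment of each scale. This is the source of Gallagher's original technical effort, and it is the step in which the logarithmic factor $(\log n)^2$ in the approximation rate is shown to be sharp.
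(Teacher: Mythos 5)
First, note that the paper does not prove Theorem \ref{reallygal}: it is quoted directly from \cite{Gallagher1962}, so there is no internal argument to compare yours against; the comments below therefore assess your proposal on its own terms. The genuine gap is at what you yourself call the crux. The pairwise quasi-independence bound $|A_m \cap A_n| \ll |A_m|\,|A_n|$ is false. Take $n = 2m$: if $\langle m\alpha\rangle \langle m\beta\rangle < \psi(2m)/4$ then $(\alpha,\beta) \in A_m$, and since $\langle 2m\gamma\rangle \le 2\langle m\gamma\rangle$ for any $\gamma$, also $(\alpha,\beta) \in A_{2m}$; hence
\[
|A_m \cap A_{2m}| \;\gg\; \psi(2m)\log\frac{1}{\psi(2m)} \;\asymp\; \frac{1}{m\log m} \;\asymp\; |A_m|,
\]
whereas $|A_m|\,|A_{2m}| \asymp (m\log m)^{-2}$. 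So the pairwise bound fails by an unbounded factor $\asymp m\log m$ whenever $\gcd(m,n)$ is large, exactly as in the one-dimensional Khintchine setting (this is why Khintchine's theorem needs monotonicity). What a divergence Borel--Cantelli argument actually requires is the averaged bound $\sum_{m,n \le N} |A_m \cap A_n| \ll \bigl(\sum_{n\le N}|A_n|\bigr)^2 \asymp (\log\log N)^2$, and your sketch defers precisely this---the decomposition by $d=\gcd(m,n)$ together with control of the overlaps between transversal thin components of distinct hyperbolic crosses---to a single sentence. That estimate is where essentially all of the content of the theorem lives; without it the argument does not close. The standard proofs do not proceed this way: Gallagher first establishes a zero--one law for such multiplicative limsup sets and then handles the divergence case by reducing to a one-variable statement; see also \cite[\S 1.4.4]{BRV}.

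Your proposed upgrade from positive to full measure is also not justified as stated. The set $\limsup_n A_n$ is \emph{not} invariant under rational translations in each variable: $\langle n(\alpha + p/q)\rangle = \langle n\alpha + np/q\rangle$ differs from $\langle n\alpha\rangle$ unless $q \mid n$, so translation by $p/q$ carries $A_n$ to a genuinely different set for most $n$. The relevant zero--one law (Gallagher's, or Cassels') rests on a nontrivial dilation/density argument and is itself one of the main contributions of \cite{Gallagher1962}; it cannot be waved in via translation invariance. In summary: your computation of $|A_n| \asymp (n\log n)^{-1}$ and the divergence of $\sum_n |A_n|$ are correct, but the two essential steps---quasi-independence on average and the zero--one law---are asserted rather than proved, and the first is stated in a pairwise form that is demonstrably false.
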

\noindent In other words, almost surely Littlewood's conjecture holds with a
``log squared'' factor to spare. This is sharp in that
 for any $\kap >2$ the set of $(\alpha  , \beta) \in \R^2$ for which
\[ \liminf_{n \to \infty} n (\log n)^\kap \langle n \alpha\rangle  \langle n \beta\rangle = 0\]
has zero Lebesgue measure \cite{BV2015note, Spe1942}.

In view of Theorem \ref{reallygal}, it is natural to ask the following question: given a planar curve or straight line $C$, does almost every point $(\alpha, \beta) \in C$ satisfy \eqref{equ:typical-mda-property}? The problem was first investigated  by Beresnevich, Haynes, and Velani \cite{Beresnevich-Haynes-Velani2015}, who considered the special case of  vertical lines $L_{\alpha} := \{ (\alpha, \bet):  \bet \in \R \}$. They showed that for any $\alpha \in \R$, almost every point $(\alpha  , \beta) $ on $L_{\alpha}$ satisfies \eqref{equ:typical-mda-property}. They also proved an inhomogeneous version of this statement assuming the truth of the notorious Duffin--Schaeffer conjecture~\cite[\S 1.2.2]{BRV}. Note that in view of Khintchine's theorem \cite[\S 1.2.2]{BRV}, it is easy to deduce that almost every point on $L_{\alpha}$ satisfies
 \begin{equation} \label{Khintchine}
  \liminf_{n \to \infty} n (\log n) \langle n \alpha\rangle  \langle n \beta\rangle = 0.
\end{equation}
Later the first named author \cite{Chow2018} provided an alternative proof of the above mentioned results from \cite{Beresnevich-Haynes-Velani2015}. His method made use of Bohr set technology and generalised unconditionally to the inhomogeneous  setting. This was  subsequently extended to higher dimensions in \cite{CT} and \cite{LittlewoodDS}. In all these results, the fact that the lines under consideration are vertical is absolutely crucial. As a consequence of the framework developed in this paper, we are able to handle arbitrary lines.

\begin{thm}
\label{thm:mda-result}
Let $\cL$ be a line in the plane. Then for almost every $(\alpha, \beta) \in \cL$, with respect to the induced Lebesgue measure on $\cL$, we have
\begin{equation} \label{TwoLogs}
 \liminf_{n \to \infty} n (\log n)^2 \langle n \alpha\rangle  \langle n \beta\rangle = 0.
\end{equation}
\end{thm}

The exponent 2 is sharp, as we now discuss. By symmetry, we may suppose that $\cL$ is defined by
\begin{equation} \label{Lab}
\cL = L_{a,b} = \{ (\alp, \bet) \in \bR^2: \alp = a \bet + b \}
\end{equation}
for some fixed $a,b \in \bR$. The \emph{simultaneous exponent} of $(a,b) \in \bR^2$, denoted $\omega(a,b)$, is the supremum of the set of real numbers $w$ such that, for infinitely many $n \in \bN$, we have
\[
\max\{\langle n a \rangle, \: \langle nb \rangle\} < n^{-w}.
\]
Kleinbock \cite[Corollary 5.7]{Klein2003} showed that if $\omega(a,b) \le 2$ and $\epsilon >0$ then
\[ \liminf_{n \to \infty} n^{1+\epsilon} \langle n \alpha\rangle  \langle n \beta\rangle > 0  \]
for almost every $(\alpha, \beta) \in L_{a,b}$. The \emph{dual exponent} of $(a,b) \in \bR^2$, denoted $\omega^*(a,b)$, is the supremum of the set of real numbers $w$ such that, for infinitely many $(x,y) \in \bZ^2$, we have
\[
\langle xa + yb \rangle \le (|x| + |y|)^{-w}.
\]
With a slightly stronger assumption, we strengthen Kleinbock's result, showing that the exponent 2 in \eqref{TwoLogs} is sharp.
\begin{thm} \label{ConvergenceResult}
Let $(a,b) \in \bR^2$ with $\omega^*(a,b) < 5$, and let $\psi: \bN \to \bR_{\ge 0}$ be a decreasing function such that
\[
\sum_{n =1}^\infty \psi(n) \log n < \infty.
\]
Then for almost all $(\alp, \bet) \in L_{a,b}$ there exist at most finitely many $n \in \bN$ for which
\[
\langle n \alp \rangle  \langle n \bet \rangle < \psi(n).
\]
\end{thm}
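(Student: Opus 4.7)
The plan is to verify the convergence Borel--Cantelli condition. Parametrising $L_{a,b}$ by $\beta \in \bR$ and fixing a bounded interval $J \subset \bR$ (sufficient by $\sigma$-additivity), set $A_n := \{\beta \in J : \langle n\beta\rangle\langle n(a\beta+b)\rangle < \psi(n)\}$. If $\sum_n |A_n| < \infty$, the Borel--Cantelli lemma completes the proof.

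For the local analysis, fix $n$ and partition $J$ (up to null sets) by the intervals $I_p = [(p-1/2)/n,\,(p+1/2)/n]$ with $|p| \le C_J n$. Substituting $\beta = (p+u)/n$ with $u \in [-1/2, 1/2]$, and writing $ap + nb = q + c$ with $q \in \bZ$ and $c \in [-1/2, 1/2]$ (so $|c| = \langle ap + nb\rangle$), the inequality defining $A_n \cap I_p$ transforms into $|u| \cdot \langle au + c\rangle < \psi(n)$. A short measure computation on the solution set in $u$, together with the $O(1)$ relevant integer shifts of $c$, yields
\[
|A_n| \ll \frac{1}{n}\sum_{|p| \le C_J n}\min\left(\frac{\psi(n)}{\langle ap + nb\rangle},\,\sqrt{\psi(n)}\right).
\]

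Next I would estimate the inner sum via the structural theory of dual Bohr sets, exploiting the hypothesis $\omega^*(a, b) < 5$. Split according to whether $\langle ap + nb\rangle \le \sqrt{\psi(n)}$. The hypothesis yields a count of the form $|\{|p| \le C_J n : \langle ap + nb\rangle \le \delta\}| \ll n\delta + \cE(n)$ with $\cE(n)$ polylogarithmic in $n$; crucially, $\omega^*(a) \le \omega^*(a, b) < 5$ keeps the partial quotients of $a$ under control. Combining this with a dyadic layer-cake argument on the complementary part gives
\[
|A_n| \ll \psi(n)\log(1/\psi(n)) + \frac{\sqrt{\psi(n)}\,\cE(n)}{n}.
\]
The sum $\sum_n\psi(n)\log(1/\psi(n))$ is finite: on $\{n : \psi(n) \ge n^{-2}\}$ we have $\log(1/\psi(n)) \le 2\log n$, while on its complement we decompose $\log(1/\psi(n)) = \log(1/(n^2\psi(n))) + 2\log n$ and apply $x\log(1/x) \le 1/e$ on $(0, 1)$ to bound the first summand by $O(1/n^2)$. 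The second term is summable by Cauchy--Schwarz against the hypothesis $\sum_n \psi(n)\log n < \infty$; Borel--Cantelli then closes the argument.

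The principal obstacle is to establish the dual Bohr set count with polylogarithmic error under only $\omega^*(a, b) < 5$, which permits $a$ to have somewhat unbounded partial quotients. This calls for a careful balance between the joint Diophantine information on $(a, b)$, the continued-fraction structure of $a$, and the inhomogeneous shift $nb$, and is the technical heart of the structural theory of dual Bohr sets advertised in the abstract.
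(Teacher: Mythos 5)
Your overall scaffolding (convergence Borel--Cantelli, localisation to intervals of length $1/n$, the reduction of $|A_n|$ to a sum of $\min\bigl(\psi(n)/\langle ap+nb\rangle,\sqrt{\psi(n)}\bigr)$ over $|p|\le C_J n$, and the final summability computation) is sound and is close in spirit to the argument of Beresnevich--Velani that the paper follows. The problem is the step you yourself flag as the ``principal obstacle'': the counting bound
\[
\#\{\,|p| \le C_J n : \langle ap + nb\rangle \le \del\,\} \ll n\del + \cE(n), \qquad \cE(n) = (\log n)^{O(1)},
\]
for each \emph{individual} $n$. This does not follow from $\ome^*(a,b) < 5$, and it is false in general. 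Restricting the dual condition to $y=0$ only gives $\langle g a\rangle \gg g^{-5-\eps}$, so consecutive solutions $p$ are spaced by some $g \gg \del^{-1/(5+\eps)}$ and the count is only $\ll n\del^{1/(5+\eps)} + 1$. For $a$ whose continued fraction saturates this (which $\ome^*(a,b)<5$ permits), the error term is a positive power of $n$, e.g.\ of size $n^{4/5}$ when $\del \asymp \sqrt{\psi(n)} \asymp n^{-1/2}$; then $\sqrt{\psi(n)}\,\cE(n)/n$ is not summable near the critical case $\psi(n) \asymp n^{-1}(\log n)^{-1-\eps}$. No amount of extra input from the inhomogeneous shift $nb$ rescues a per-$n$ polylogarithmic error.

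The fix --- and this is exactly what the paper does --- is to never ask for a per-$n$ count. Sum $|A_n|$ over a dyadic block $n \in [2^t, 2^{t+1})$ first; the quantity you then need is the cardinality of the three-dimensional dual Bohr set
\[
B(\del;Q;a,b) = \{(p_2,q,p_1) \in \bZ^3 : |p_2|,|q| \le Q,\ |p_2 a + qb - p_1| \le \del\}, \qquad Q \asymp 2^t,
\]
i.e.\ a count averaged over the modulus $q$ as well as over $p$. Lemma 4.1 of the paper bounds this by $\ll_{a,b} \del Q^2$ via geometry of numbers: one computes the reduced successive minima $\lam_1\le\lam_2\le\lam_3$ of the slab $\lam^{-1}\cB$, uses $\ome^*(a,b)<5$ to bound $\lam_1$ from below (hence $\lam_3$ from above), and traps $B$ in a generalised arithmetic progression of the expected size, with \emph{no} additive error term at all in the admissible range $\del \gg Q^{-1/2}(\log Q)^{-3/2}$. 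Individual moduli $n$ inside the block may still contribute anomalously many solutions, but their total over the block is controlled. If you reorganise your argument to sum dyadically in $n$ before invoking the count, and replace your claimed lemma by this three-dimensional Bohr set bound, your proof goes through; as written, the key lemma is both unproven and unprovable.
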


By Khintchine transference \cite[Theorem K]{BL2010}, note that if $\omega^*(a,b) < 5$ then $\omega(a,b) < 2$, so our assumption is indeed stronger than Kleinbock's. Our condition is nonetheless typical: using the Hausdorff measure generalisation of the Khintchine--Groshev theorem, namely \cite[Theorem 1.4.37]{BRV}, it can be verified that the exceptional set
\[
\{ (a,b) \in \bR^2: \omega^*(a,b) \ge 5 \}
\]
has Hausdorff dimension $3/2$. We establish Theorem \ref{ConvergenceResult} in \S \ref{ConvergenceTheory} via the methodology of \cite[\S 4]{BV2007} and \cite{BL2007}. To prove the requisite counting lemma, we develop the structural theory of dual Bohr sets, extending the constructions given in \cite{TV2006, TV2008, Chow2018, CT}.

By invoking a recent estimate of Huang and Liu \cite[Theorem 8]{HuangLiu}, we are able to deduce the following variation on Theorem \ref{ConvergenceResult} involving multiplicative Diophantine exponents. The \emph{multiplicative exponent} of $(a,b) \in \bR^2$, denoted $\ome^\times(a,b)$, is the supremum of the set of real numbers $w$ such that, for infinitely many $n \in \bN$, we have
\[
\langle na \rangle  \langle nb \rangle \le n^{-w}.
\]

\begin{thm} \label{ConvergenceVariant}
Let $(a,b) \in \bR^2$ with $\omega^\times(a,b) < 4$, and let $\psi: \bN \to \bR_{\ge 0}$ be a decreasing function such that
\[
\sum_{n =1}^\infty \psi(n) \log n < \infty.
\]
Then for almost all $(\alp, \bet) \in L_{a,b}$ there exist at most finitely many $n \in \bN$ for which
\[
\langle n \alp \rangle  \langle n \bet \rangle < \psi(n).
\]
\end{thm}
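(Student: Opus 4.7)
The plan is to follow the Borel--Cantelli architecture of the proof of Theorem~\ref{ConvergenceResult} (carried out in \S\ref{ConvergenceTheory}), swapping the dual Bohr set counting lemma for the estimate of Huang and Liu \cite[Theorem 7]{HuangLiu}, whose multiplicative flavour allows the weaker hypothesis $\omega^\times(a,b) < 4$ in place of $\omega^*(a,b) < 5$.

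First I parametrise $L_{a,b}$ by the second coordinate, so that a typical point is $(\alp,\bet) = (a\bet+b,\bet)$, and fix a compact interval $I \subset \bR$. For each $n \in \bN$ set
\[ E_n := \{ \bet \in I : \langle n(a\bet+b) \rangle \, \langle n\bet \rangle < \psi(n) \}. \]
By the Borel--Cantelli lemma it suffices to prove $\sum_{n \geq 1} |E_n| < \infty$, where $|\cdot|$ denotes one-dimensional Lebesgue measure.

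Next I decompose $E_n$ dyadically in the size of $\langle n\bet \rangle$, obtaining $E_n = \bigcup_{0 \leq k \ll \log n} E_{n,k}$, where on $E_{n,k}$ one has $\langle n\bet \rangle \asymp 2^{-k}$ and $\langle n(a\bet+b) \rangle \ll 2^k \psi(n)$. Writing $\bet = (q+u)/n$ with $q \in \bZ$ and $|u| \leq 2^{-k}$, the set $E_{n,k}$ is contained in a union of short intervals indexed by integer pairs $(p,q)$ satisfying the resonance condition $|aq + nb - p| \ll 2^k \psi(n)$, each of length $\ll 2^k \psi(n)/n$. Summing the lengths over admissible $(p,q)$ reduces matters to counting such resonances in a dyadic box for $n$; this is exactly the counting problem addressed by \cite[Theorem 7]{HuangLiu}, which under $\omega^\times(a,b) < 4$ gives a bound of the desired shape. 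Putting the count into the sum yields $|E_{n,k}| \ll \psi(n)/n$ with implicit constant uniform in $k$, and hence $|E_n| \ll \psi(n) \log n$; the convergence hypothesis $\sum_n \psi(n) \log n < \infty$ then closes the argument.

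The main obstacle, and essentially the only non-bookkeeping step, is verifying that the Huang--Liu estimate applies at every dyadic scale $k$ that arises, with constants independent of $k$ and with the correct dependence on the interval $I$. Once that uniformity is secured, the remainder of the proof is a direct transcription of \S\ref{ConvergenceTheory} with the relaxed Diophantine hypothesis on $(a,b)$; in particular, no further input from the structural theory of dual Bohr sets is required, since the counting estimate is already available in the form we need.
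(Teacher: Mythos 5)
Your proposal takes essentially the same route as the paper: the proof of Theorem \ref{ConvergenceVariant} is the convergence Borel--Cantelli argument of \cite[\S 4]{BV2007}, run verbatim as for Theorem \ref{ConvergenceResult}, except that the counting input (Corollary \ref{counting}) is now supplied by \cite[Theorem 7]{HuangLiu} under $\omega^\times(a,b)<4$ rather than by the dual Bohr set structure lemma under $\omega^*(a,b)<5$. One small correction to your write-up: the resonance count is only available summed over a dyadic block $2^t \le n < 2^{t+1}$ (this is the quantity $N(t,m)$), so what you obtain is a bound on $\sum_{2^t \le n < 2^{t+1}} |E_{n,k}|$ rather than the per-$n$ bound $|E_{n,k}| \ll \psi(n)/n$, which can fail for individual $n$; since Borel--Cantelli only requires $\sum_n |E_n| < \infty$, the block-averaged bound (together with the monotonicity of $\psi$ and the standard reduction to $q^{-1}(\log q)^{-3} < \psi(q) < q^{-1}(\log q)^{-1}$) suffices.
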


The assumption $\ome^\times(a,b) < 4$ also implies Kleinbock's assumption, owing to the trivial inequality $2\ome(a,b) \le \ome^\times(a,b)$, and it is also typical. It follows from the work of Hussain and Simmons \cite[Corollary 1.4]{HussainSimmons}, or alternatively from the prior but weaker conclusions of \cite[Remark 1.2]{BV2015note}, that the exceptional set
\[
\{ (a,b) \in \bR^2: \ome^\times(a,b) \ge 4 \}
\]
has Hausdorff dimension $7/5$.

\begin{remark}
Since the initial release of this manuscript, Huang \cite{Huang} has relaxed the diophantine assumptions of Theorems \ref{ConvergenceResult} and \ref{ConvergenceVariant} to $\ome(a,b) < 2$.
\end{remark}

\bigskip

Our approach to Theorem \ref{thm:mda-result} differs greatly from that of the previous works  \cite{Beresnevich-Haynes-Velani2015, Chow2018, CT}, which were driven by continued fraction or Bohr set analysis. At the heart of our method lies an effective asymptotic equidistribution result for one-parameter unipotent orbits in $\SL(3,\bR)/ \SL(3, \bZ)$. This new theorem in homogeneous dynamics is the subject of the next subsection. 


\subsection{Ratner's equidistribution theorem}
\label{subsec-intro-ratners-theorem}
\par Let $G$ be a Lie group, equipped with a one-parameter unipotent subgroup $U := \{ u(t) : t \in \R\} \subset G$. Let $\Gamma$ be a lattice in $G$, and $X = G/\Gamma$ be the associated homogenous space. Ratner's famous theorem on orbit closure \cite{ratner_2} asserts that for any $x \in X$ the closure of the orbit $Ux$ is a homogeneous subspace of $X$. That is, it has the form $Lx$, where $L$ is an analytic subgroup of $G$ containing $U$ such that (a) the orbit $Lx$ is closed in $X$, and (b) there exists an $L$-invariant probability measure $\mu_L$ supported on $Lx$. This confirmed Raghunathan's topological conjecture; see \cite{Dani1981}. Moreover, Ratner's equidistribution theorem~\cite{ratner_2} tells us that the orbit $Ux$ is equidistributed in $Lx$ in the following sense: for any $F \in C_b(Lx)$, we have
\[
\lim_{T \to +\infty} \frac{1}{T} \int_{0}^{T} F(u(t)x) \dd t = \int_{Lx} F \dd \mu_L.
\]

\par Ratner's equidistribution theorem is a fundamental result in homogeneous dynamics and has many interesting and deep applications to number theory \cite{eskin_mozes_shah1996, eskin1998ICM, DWM, Shah_2, Shah2010, shah2010ICM}. It is based on her seminal work \cite{Ratner} on measure rigidity of unipotent actions, which confirmed Raghunathan's measure conjecture (see \cite{Dani1981}); see also \cite{Margulis-Tomanov} for an alternative proof applicable to algebraic groups. A weakness of Ratner's theorem is that it is not effective: given a particular unipotent orbit, it does not tell how fast it tends to its limit distribution. This renders it less helpful when studying problems that are sensitive to error terms. Establishing Ratner's theorem with an effective error term provides  a more profound viewpoint with regards to the asymptotic behaviour of unipotent orbits in homogeneous spaces, as well as their connections to number theory and representation theory. For this reason, this has been a central topic in homogeneous dynamics ever since Ratner's groundbreaking work in the nineties.

\par For unipotent subgroups which are horospherical, we can establish effective equidistribution using a method from dynamics and results from representation theory, assuming that the ambient group has Kazhdan's property (T) or similar spectral gap properties. This method, called the ``thickening method'', originates in Margulis's thesis \cite{Margulis2004}, and has since been a standard way to study effective equidistribution of horospherical orbits~\cite{Klein-Mar-Effective-Equid, Kleinbock-Shi-Weiss, Dabbs-Kelly-Li, Shi2019}. In particular, for $G = \SL(2,\R)$, since any unipotent subgroup is horospherical, Margulis's thickening method applies. We also refer the reader to \cite{Flaminio-Forni,Strombergsson2004,Strombergsson2013} for direct representation-theoretic approaches to establishing effective equidistribution for unipotent orbits in $\SL(2,\R)/\Gamma$ with explicit error terms.

\par For non-horospherical unipotent orbits, we have effective equidistribution results for $G$ being nilpotent \cite{green_tao2012}, $G= \SL(2,\R) \times \R$ (see \cite{Venkatesh2010,Sarnak-Ubis}), $G = \SL(2, \R) \ltimes (\R^2)^{\oplus k}$ (see \cite{Strombergsson2015, Browning2016, SV2018}), and $G= \SL(2,\R)^k$ (see \cite{Ubis2016}). Their proofs rely on effective equidistribution of unipotent orbits in $\SL(2, \R)/\Gamma$, delicate analysis on the explicit expressions of unitary representations of $\SL(2,\R)$, and Fourier expansions on tori. Thus, one cannot easily adapt the proofs to establish effective equidistribution results for simple Lie groups of higher rank, such as $\SL(n,\R)$ ($n \geq 3$). In light of Dani's correspondence \cite{Dani}, problems in Diophantine approximation can be studied by analysing orbits in $\SL(n,\R)/\SL(n,\Z)$. Those cases are therefore important for number theory. There are also effective results in other settings: see \cite{einsiedler-margulis-venkatesh, Margulis-Mohammadi, li-margulis} for effective equidistribution results for large closed orbits of semisimple subgroups and their applications to number theory, and \cite{lindenstrauss-margulis} for an effective density result and its application to number theory.

In this paper, we establish an effective equidistribution result for a particular type of one-parameter (non-horospherical) unipotent orbits in 
\[\SL(3, \R)/ \SL(3, \Z).\]
Recall that a point
\[
(x_1, x_2, \dots, x_d) \in \bR^d
\]
is \emph{Diophantine} if for some $\kap > 0$ we have
\[
\inf_{q \in \bN} \max_{1 \leq i \leq d} q^{d^{-1} + \kap} \langle q x_i \rangle > 0.
\]
A simple consequence of the Borel--Cantelli lemma from probability theory is that Diophantine points are typical, that is to say that the set of non-Diophantine points $(x_1, x_2, \dots, x_d) \in \bR^d$ is a set of $d$-dimensional Lebesgue measure zero. For $m \in \N$, let $C^m_b(X)$ denote the space of differentiable functions on $X$ with bounded derivatives up to order $m$. For $f \in C^m_b(X)$, let 
\[ \|f\|_{C^m_b} := \sum_{\ord (D) \le m} \|D f\|_{\infty} \]
denote the sum of supremum norms of derivatives of $f$ up to order $m$. Our new effective equidistribution result is as follows.
\begin{thm}
\label{thm:effective-ratner-main-result}
\par Let $G = \SL(3,\R)$, $\Gamma = \SL(3, \Z)$, and $X = G/\Gamma$. Let $\mu_G$ denote the $G$-invariant probability measure on $X$. Given $(a, b) \in \R^2$ such that $(a,b)$ is Diophantine, we consider the straight line
\[\{ (y, x) \in \R^2 : y = f (x) : = a x + b \}.\]
Define $u : \R^2 \to G$ by
\[ u(y, x) := \begin{bmatrix} 1 & & y \\ & 1 & x \\ & & 1 \end{bmatrix}. \]
For $t \geq 0$ and $s \geq 0$, let 
\[ a(t,s) := \begin{bmatrix} e^t & & \\ & e^s & \\ & & e^{-t-s}  \end{bmatrix} \in G. \]
Let $\cI$ be a compact subinterval of $\bR$. Then there exist constants $m \in \N$, $c \in (0,1)$, and $\eta>0$ such that for any $F \in C^{m}_b(X)$, any subinterval $J$ of $\cI$, any $s >0$, and any $0 < t \leq cs$, we have
\begin{equation}
\label{equ:effective-ratner}
\Bigl | \frac{1}{|J|}\int_J F([a(t,s) u(f(x), x)]) \dd x - \int_X F \dd \mu_G \Bigr | \ll_\cI |J|^{-1} e^{-\eta t} \|F\|_{C^m_b}.
\end{equation}
\end{thm}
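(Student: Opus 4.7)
The plan is to reduce \eqref{equ:effective-ratner} to an effective horospherical equidistribution statement by shearing the integrand, thickening the resulting long line into a tube inside the full expanding horospherical subgroup of $a(t,s)$, and invoking a Kleinbock--Margulis-type estimate; the Diophantine hypothesis on $(a,b)$ enters via Dani's correspondence.

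\emph{Shearing.} Using the commutation relation
\[
a(t,s)\,u(y,x) = u\bigl(e^{2t+s}y,\; e^{t+2s}x\bigr)\,a(t,s),
\]
and substituting $\xi = e^{t+2s}x$, the left-hand side of \eqref{equ:effective-ratner} becomes
\[
\frac{1}{|\tilde J|}\int_{\tilde J} F\bigl(u(\alpha\xi+\beta,\xi)\cdot y_0\bigr)\,d\xi,
\]
with $\alpha = ae^{t-s}\in(0,1)$ (using $t\leq cs<s$), $\beta = be^{2t+s}$, $y_0 = [a(t,s)\Gamma]$, and $|\tilde J|=e^{t+2s}|J|$. Geometrically this is an integral of $F$ along a long (length $\asymp|\tilde J|$) and mildly tilted (slope $\alpha$) line segment in the two-dimensional abelian unipotent $U=\{u(y,x)\}$ through the base point $y_0$.

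\emph{Thickening and equidistribution.} A root-space computation shows that, for $t<s$, the full expanding horospherical subgroup of $a(t,s)$ is the three-dimensional Heisenberg subgroup $U^+$ containing $U$ and generated also by the one-parameter group $z\mapsto I+z E_{21}$, with expansion exponents $2t+s,\,t+2s,\,s-t$ respectively. The slowest is $s-t$, and the hypothesis $t\leq cs$ provides $s-t\geq ((1-c)/c)\,t$, so any decay of the form $e^{-\eta_0(s-t)}$ translates into a decay $e^{-\eta t}$. I would next replace the line integral by an integral against a smooth bump $\chi_\delta\in C^\infty_c(U^+)$ supported in a $\delta$-tube around the line (width $\delta$ in each transverse direction, unit slice-integral along the line), picking up a thickening error $O(\delta\,\|F\|_{C^1_b})$ from a first-order Taylor expansion in the transverse directions. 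Then I would apply an effective Kleinbock--Margulis horospherical equidistribution (obtained via the thickening method from exponential mixing of $a(t,s)$ on $L^2(X)$, which follows from the $\SL(3,\R)$ spectral gap):
\[
\int_{U^+}\chi_\delta(v)\,F(v\cdot y_0)\,dv \;=\; \int_X F\,d\mu_G \;+\; O\!\bigl(\delta^{-m}\,e^{-\eta_0(s-t)}\,\|F\|_{C^m_b}\bigr).
\]
Balancing via $\delta\asymp e^{-\eta_0(s-t)/(m+1)}$ and absorbing the Jacobian $e^{t+2s}$ of the change of variable into the factor $|J|^{-1}$ yields the claimed bound.

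\emph{Diophantine input and main obstacle.} The hypothesis that $(a,b)$ is Diophantine with exponent $\tfrac12+\kappa$ (matching $d=2$) is used via Dani's correspondence to keep the lattices $a(t,s)u(f(x),x)\,\Z^3$ quantitatively non-degenerate for $x\in J$, so that $y_0$ lies in a fixed compact subset of $X$ and the thickened tube does not concentrate near closed orbits of intermediate subgroups of $\SL(3,\R)$ containing $U$ (conjugates of $\SL(2,\R)\ltimes\R^2$, etc.). The hard part, and the main obstacle, is precisely this coupled analysis: one must make the horospherical estimate uniform for bumps $\chi_\delta$ supported on a long, thin, slightly tilted tube in the non-abelian Heisenberg $U^+$, while a quantitative linearisation near each intermediate subgroup converts the Diophantine exponent $\tfrac12+\kappa$ into a non-concentration bound. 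It is this coupling of the slow expansion rate $s-t$ (hence the constraint $t\leq cs$), the slope $\alpha$, and the Diophantine exponent that ultimately produces the admissible exponent $\eta$ in \eqref{equ:effective-ratner}.
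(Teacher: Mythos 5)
Your reduction to the expanding horospherical subgroup $U^+$ of $a(t,s)$ contains a genuine gap at its central step. The displayed estimate
\[
\int_{U^+}\chi_\delta(v)\,F(v\cdot y_0)\,dv \;=\; \int_X F\,d\mu_G \;+\; O\bigl(\delta^{-m}\,e^{-\eta_0(s-t)}\,\|F\|_{C^m_b}\bigr)
\]
is not a consequence of the Kleinbock--Margulis thickening method. What mixing of $a(t,s)$ gives is equidistribution of translates $a(t,s)\,B\,a(t,s)^{-1}y_0$ of a \emph{fixed full-dimensional} box $B\subset U^+$, i.e.\ of anisotropic boxes with side lengths $\asymp e^{2t+s}\times e^{2s+t}\times e^{s-t}$ in the $E_{13},E_{23},E_{21}$ directions. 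Your $\chi_\delta$ is supported on a $\delta$-neighbourhood of a one-dimensional segment: its pullback under $a(t,s)^{-1}$ is a set of diameter $O(|J|)$ in the $E_{13},E_{23}$ plane and of width $\delta e^{-(s-t)}$ in the $E_{21}$ direction, so it is not (even approximately) the $a(t,s)$-image of a fixed box, and the error in the thickening argument degrades with the eccentricity of the tube relative to the expanded box, not merely with $\delta$. If such tube estimates followed from mixing, one would obtain effective equidistribution of one-parameter unipotent orbits in complete generality, which is precisely the open problem this theorem makes partial progress on. The paragraph you label ``main obstacle'' (non-concentration near intermediate subgroups via linearisation) is not a repair of this step but an acknowledgement that the substantive content of the theorem is missing from the argument; no mechanism is given for it, and it is not where the Diophantine hypothesis actually enters.

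For contrast, the paper avoids thin tubes in $U^+$ altogether by a two-stage factorisation $a(t,s)=\xi(t/2)\,d(s+t/2)$ with $\xi(\tau)=\diag(e^{2\tau},e^{-\tau},e^{-\tau})$. After an exponentially small correction, the orbit $\{[d(s+t/2)v(x)(\I,(-a,b))]:x\in J\}$ lies in the closed orbit of $\Aff(H)\cong\SL(2,\R)\ltimes\R^2$, and Str\"ombergsson's effective Ratner theorem for $\SL(2,\R)\ltimes\R^2/\SL(2,\Z)\ltimes\Z^2$ --- this is where the Diophantine condition on $(a,b)$ is used, via Fourier analysis on the torus fibres --- shows that this piece equidistributes in $[\Aff(H)]$ with rate $e^{-\tilde\eta s}$. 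One then decomposes the $\SL(2,\R)$ component into small cells; over each cell the $\R^2/\Z^2$ component fills out the \emph{full two-dimensional} expanding horospherical subgroup of $\xi(t/2)$, so the thickening method applies legitimately there (Theorem~\ref{thm:effective-horospherical-subgroup}) and pushes the equidistribution from $[\Aff(H)]$ to all of $X$. Supplying an intermediate equidistribution input of this kind (or an honest non-concentration argument for thin tubes) is what your proposal is missing.
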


\noindent Here, and throughout, we employ the Vinogradov and Bachmann--Landau notations: for
functions $f$ and positive-valued functions $g$, we write $f \ll g$ or $f = O(g)$ if
there exists a constant $C$ such that $|f| \le Cg$ pointwise, and write $f \asymp g$ if $f \ll g$ and $g \ll f$. The implied constant in \eqref{equ:effective-ratner} depends on $\cI$ but not $J$.

\begin{remark}
\par $\quad$
\begin{enumerate}
\item Let us fix a point $x_0 \in \R$. By conjugation, it is easy to verify that
\[ a(t, s) u(f(x), x) = u(r \vv h) g, \]
where $g = a(t, s) u(f(x_0), x_0)$, $r = e^{2s+t} (x - x_0)$ and $\vv h = (a e^{t-s}, 1 )$.
Therefore, the set $\{ [a(t,s)u(f(x), x)]: x \in [x_0 - R, x_0 +R ]  \}$ is equal to the one-parameter unipotent orbit
\[ \{ [u(r\vv h)g]: r \in [-R e^{2s+t}, R e^{2s+t}] \}. \]
Specialising $t = c s$ and $R = 1/2$ in Theorem \ref{thm:effective-ratner-main-result} reveals that this orbit, which has length $T = 2R e^{2s+t} = e^{(2+c)s}$, is $O(T^{-\delta})$-equidistributed in $X$, for $\delta = \eta c/(2+c)$.
\item We expect that the method can be generalised to prove effective equidistribution of one-parameter unipotent orbits in $\SL(n,\R)/\SL(n,\Z)$. This is an on-going project.
\item We expect that the method also applies when we replace the straight line by a $C^2$ planar curve, subject to a curvature assumption. This comes with additional technical difficulties, and is also work in progress. Such a result would lead to a multiplicative analogue of \cite[Theorem 1]{BDV2007}.
\end{enumerate}
\end{remark}

Compared to previous work on effective results in homogeneous dynamics, our result has the following novel attributes. First of all, our result applies to $G = \SL(3, \R)$, which is the first important case of simple Lie groups of higher rank, and the unipotent subgroup here is one-dimensional and non-horospherical. Secondly, the essence of the proof differs substantially from previous work. The main part of our proof comes from dynamical systems, rather than representation theory or Fourier analysis, although we do require Str\"ombergsson's result (see Theorem \ref{thm:strombergsson} below) on effective equidistribution for $G = \SL(2, \R) \ltimes \R^2$ which uses Fourier analysis.

\subsection{Logarithm laws in homogeneous spaces}
\label{subsec-log-laws}
\par In this subsection, we discuss an important problem in homogeneous dynamics which is closely related to Gallagher's theorem. Let us fix a non-compact homogeneous space $X = G/\Gamma$, where $G$ denotes a semisimple Lie group and $\Gamma$ denotes a non-uniform lattice in $G$, a point $o \in X$, and a subgroup
\[ A = \{ a(\vv t): \vv t \in \R^m \} \subset G \]
which is contained in a Cartan subgroup of $G$. Let us fix a right-invariant Riemannian metric $d(\cdot , \cdot)$ on $G$. Then $d(\cdot, \cdot)$ induces a metric on $X = G/\Gamma$. Let $\|\cdot\|$ denote the supremum norm on $\R^m$. Given $x \in X$, it is natural to consider the fastest rate at which the orbit 
\[
\{ a(\vv t) x : \vv t \in \R^m \}
\]
escapes to infinity as $\|\vv t\| \to \infty$, namely the asymptotic behavior of 
\[
\displaystyle \sup_{\|\vv t\| \leq R} d(a(\vv t)x, o)
\]
as $R \to \infty$.

\par This problem was first investigated by Sullivan \cite{Sullivan1982}, who considered the case in which $G = \SO(k+1,1)$ and $A = \{a(t) : t \in \R\}$ is a maximal $\R$-split Cartan subgroup of $G$, and established the following logarithm law: for almost every $x \in X$, with respect to the Haar probability measure $\mu_G$, we have
\begin{equation} \label{svsull}
 \limsup_{t \to \infty} \frac{d(a(t)x, o)}{\log|t|} = \frac{1}{k}.
 \end{equation}
Here $d(\cdot, \cdot)$ is chosen such that $\SO(k+1)\setminus\SO(k+1,1) \cong \bH^{k+1}$ is the universal $(k+1)$-dimensional hyperbolic space of sectional curvature $-1$. In this case $X$ corresponds to a non-compact, finite-volume hyperbolic manifold $ M:= \bH^{k+1}/\Gamma$ where $\Gamma$ is a geometrically finite Kleinian group of the first kind  with parabolic elements. The dynamics of $A = \{a(t): t \in \R\}$ corresponds to the geodesic flow on the unit tangent bundle of $M$. The key to the logarithm law is a Khintchine-type theorem for the action of $\Gamma$ on $\bH^{k+1}$.  This  was originally established by Patterson \cite{Patterson1976} for geometrically finite groups of the first kind and later extended to groups of the second kind in \cite{SV95}---see also \cite[\S 10.3]{BDV2006}. In view of the latter, there is a natural  analogue of \eqref{svsull} associated to any non-elementary, geometrically finite Kleinian group.

Later Kleinbock and Margulis \cite{Kleinbock-Margulis-Log-Laws} (see also \cite{Kleinbock-Margulis-Log-Laws-Erratum} for its erratum) generalised this logarithm law to a general semisimple Lie group $G$ and its diagonal subgroup $A \subset G$, that is, they showed that there exists a constant $\kappa$ depending on $G$, $\Gamma$, $d(\cdot, \cdot)$, and $A$, such that for almost every $x \in X$ we have
\[ \limsup_{\|\vv t\| \to \infty} \frac{d(a(\vv t) x, o)}{\log \|\vv t\|} = \kappa. \]
There are similar logarithm laws for unipotent flows; we refer the reader to \cite{Athreya-Margulis-I}, \cite{Athreya-Margulis-II} and \cite{Yu2017}. For a discussion of logarithm laws for hyperbolic manifolds, see \cite[\S4.2]{BGSV18}.

It is natural to consider the following finer question: given a proper submanifold $\mathcal{U}$ of $X$ not containing any open subsets of horospherical orbits, does a typical point in $\mathcal{U}$ satisfy the same logarithm law? The method of \cite{Kleinbock-Margulis-Log-Laws} relies on spectral gap properties of unitary representations of semisimple Lie groups, and thus cannot be applied to study this finer problem. In the present article, we provide a partial answer to this question for a special type of submanifold of $X = \SL(3,\R)/\SL(3,\Z)$ with the action of the diagonal semigroup
\[A^+ = \left\{a(\vv t) = \begin{bmatrix} e^{t_1} & & \\ & e^{t_2} & \\ & & e^{-t_1 - t_2}\end{bmatrix} , \vv t = (t_1, t_2) \in \R_{\ge 0}^2 \right\}. \]

For $\vv x \in \R^2$ and $\vv t = (t_1, t_2) \in \R^2$, let $u(\vv x)$ and $a(\vv t)$ be as in Theorem \ref{thm:effective-ratner-main-result}. We will see that Theorems \ref{thm:mda-result}, \ref{ConvergenceResult}, and \ref{ConvergenceVariant} imply the following.
\begin{cor}
\label{cor:log-law-line}   
For $\Lambda \in \SL(3, \R)/\SL(3,\Z)$, define 
\begin{equation} \label{DeltaDefinition}
\Delta(\Lam) = \max_{\bv \in \Lam \setminus \{\bzero\}} \log (1/ \| \bv \|).
\end{equation}

For $(a,b) \in \R^2$, let $L_{a,b}$ denote the straight line in Theorem \ref{thm:mda-result}. Let us map $L_{a,b}$ to a line $[L_{a,b}]$ in $X = \SL(3, \R)/\SL(3,\Z)$ by sending $\vv x \in \R^2$ to $[u(\vv x)] \in X$. Fix a point $o \in X$. If $\ome^*(a,b) < 5$ or $\ome^\times(a,b) < 4$ then for almost every $x \in [L_{a,b}] \subset X$ we have
\begin{equation}
\label{equ:log-law-1}
 \limsup_{\substack{\bt \in Q_1 \\ \|\vv t\| \to \infty}} \frac{\Delta(a(\vv t) x)}{\log \|\vv t\|} = \frac{2}{3},
 \end{equation}
and there exist constants $0< E_1 < E_2$ such that for almost every $x \in [L_{a,b}] \subset X$ we have
\begin{equation}
\label{equ:log-law}
 \limsup_{\substack{\bt \in Q_1 \\ \|\vv t\| \to \infty}} \frac{d(a(\vv t) x, o)}{\log \|\vv t\|} \in [E_1, E_2],
 \end{equation}
where $Q_1 = \bR_{\ge 0}^2$.
 \end{cor}
 
\begin{remark} \label{quadrant} \leavevmode
\begin{enumerate}
\item Theorem \ref{thm:mda-result} is not strictly required for the lower bound as stated; for this purpose the weaker Khintchine-based statement \eqref{Khintchine} suffices. However, the constant $E_1$ is better---that is to say greater---if one inserts Theorem \ref{thm:mda-result} as we do. We will expound upon this in Remark \ref{KhintchineRemark}.
	\item If we do not restrict $\vv t$ to lie in $Q_1$, Corollary \ref{cor:log-law-line} will no longer hold. In fact, letting $\vv t = (-t, 1)$ with $t \to +\infty$, for any $\vv x \in \R^2$ we have $d([a(\vv t) u(\vv x)], [e]) \gg \|\vv t\|$ as $\| \bt \| \to \infty$. This follows readily from the facts that $- \log \|a(\vv t) u(\vv x) \vv e_1\| = t \gg \|\vv t\|$, where $\vv e_1 = (1,0,0)$, and that if $\Delta (g \bZ^3)$ is sufficiently large then
\begin{equation} \label{DeltaFormula}
d([g],[e]) \asymp \Delta(g \bZ^3).
\end{equation}
    \item By \cite[Theorem 1.10]{Kleinbock-Margulis-Log-Laws}, the equality
    \begin{equation}
    \label{equ:log-laws-equality}
    \limsup_{ \|\vv t\| \to \infty} \frac{d(a(\vv t) x, o)}{\log \|\vv t\|} = \frac{2}{\kappa}
    \end{equation}
holds for almost every $x \in X$, without the restriction $\vv t \in Q_1$ in the $\limsup$. Here $\kappa>0$ is the unique constant for which \[\mu_G(\{x\in X: d(x, o) \geq T\}) \asymp e^{-\kappa T}\] holds whenever $T$ is sufficiently large.
 \end{enumerate}
\end{remark}

\noindent We conjecture that \eqref{equ:log-laws-equality} holds for almost every $x \in [L_{a,b}]$ if $(a,b)$ is Diophantine, if we impose the restriction $\vv t \in Q_1$ in the $\limsup$. Furthermore, we conjecture that this equality remains valid in the following general setup: given a non-compact homogeneous space $X = G/\Gamma$ where $G$ is semisimple, a diagonal subgroup $A = \{a(\vv t) : \vv t \in \R^m\}$ of $G$, and a proper submanifold $\mathcal{U}$ in $X$ satisfying a ``natural'' Diophantine condition, we have that for almost every $x \in \mathcal{U}$, the orbit $\{a(\vv t)x : \vv t \in \R^m\}$ follows the same logarithm law as a typical point in $X$.

\subsection{Organisation.}
\label{subsec-organization-paper}
\par The paper is arranged as follows. In \S \ref{sec-proof-effective-ratner}, we establish Theorem \ref{thm:effective-ratner-main-result} (effective equidistribution). In \S \ref{sec-MDA-lines}, we use Theorem \ref{thm:effective-ratner-main-result} to complete the proof of Theorem \ref{thm:mda-result} (Gallagher's theorem on planar lines). In \S \ref{ConvergenceTheory}, we prove Theorems \ref{ConvergenceResult} and \ref{ConvergenceVariant} (convergence theory). Finally, in \S \ref{sec-proof-log-law-lines}, we prove Corollary \ref{cor:log-law-line} (logarithm law).

\subsection{Funding and Acknowledgments} SC was supported by EPSRC Programme Grant EP/J018260/1 and EPSRC Fellowship Grant EP/S00226X/1. LY was supported by NSFC grant 11743006 and startup research funding from Sichuan University, and is supported by the Shiing-Shen Chern Membership at the Institute for Advanced Study.
\par We thank Demi Allen, Victor Beresnevich and Sanju Velani for valuable discussions on this topic and for helpful comments on an earlier version of this manuscript. We also thank Asaf Katz, Elon Lindenstrauss and Barak Weiss for their interest, encouragement and feedback. We are grateful to the anonymous reviewer for helpful comments. LY thanks the University of York for their hospitality during his visit when the project began.

\section{Effective equidistribution}
\label{sec-proof-effective-ratner}
\par This section is devoted to the proof of Theorem \ref{thm:effective-ratner-main-result}. Let us fix some notation before proceeding in earnest. Let
\begin{equation}
\label{equ:def-H}
H := \left\{ h(g):= \begin{bmatrix} 1 & \\ & g \end{bmatrix}: g \in \SL(2, \R) \right\}
\subset \SL(3, \R).
\end{equation}
Plainly, the subgroup $H$ is isomorphic to $\SL(2, \R)$. Put $\Gamma_H : = H \cap \Gamma$. Since $\Gamma_H \cong \SL(2, \Z)$, the orbit $[H] \subset X$ of $H$ is closed and is isomorphic to $\SL(2, \R)/\SL(2, \Z)$.
\par Let $\Aff(H) \subset \SL(3, \R)$ denote the subgroup
\begin{equation}
\label{equ:def-Aff-H}
\Aff(H) := \left\{ \begin{bmatrix} 1 & \vv v \\ & g \end{bmatrix}: \vv v \in \R^2, g \in \SL(2, \R) \right\}.
\end{equation}
Then
\[\Aff(H) =  H \ltimes \R^2 \cong  \SL(2, \R) \ltimes \R^2, \]
 where $H \cong \SL(2,\R)$ acts on $\R^2$ by right matrix multiplication. We write $\Gamma_{\Aff(H)} := \Aff(H) \cap \Gamma$, and note that
\[\Gamma_{\Aff(H)} =  \Gamma_H \ltimes \Z^2 \cong  \SL(2,\Z) \ltimes \Z^2 \]
is a lattice in $\Aff(H)$. This implies that the orbit $[\Aff(H)]$ of $\Aff(H)$ is closed and isomorphic to $ \SL(2, \R) \ltimes \R^2 / \SL(2,\Z) \ltimes \Z^2 $. In the sequel, we denote
\[ \begin{bmatrix} 1 & \vv v  \\ & g \end{bmatrix} \in \Aff(H) \]
by $(g , \vv v)$. With this notation, we have
\[ (g_1, \vv v_1) (g_2, \vv v_2) = (g_1 g_2 , \vv v_1 g_2 + \vv v_2). \]
\par Define
\[ D := \left\{ d(\lambda) := \begin{bmatrix} 1 & & \\ & e^{\lambda} & \\ & & e^{-\lambda} \end{bmatrix} \in H:  \lam  \in \R \right\} \]
and
\[ V := \left\{ v(r) := u(r \vv e_2) = \begin{bmatrix} 1 & & \\ & 1 & r \\ & & 1 \end{bmatrix} \in H : r \in \R   \right\}. \]
For $t \in \R$, write
\[ \xi(t) :=  \begin{bmatrix} e^{2t} & & \\ & e^{-t} & \\ & & e^{-t} \end{bmatrix},\]
and note that $a(t,s) = \xi(t/2) d(s+t/2)$. 
\par Let $\vv e_1 := (1,0)$ and $\vv e_2 := (0,1)$ be the standard basis vectors. Then for $x \in \R$ we have
\[
\begin{array}{rcl}
a(t,s ) u(\varphi(x)) & = & \xi(t/2) d(s + t/2)  v(x) u(f(x) \vv e_1) \\
                      & = & \xi(t/2) d(s + t/2)  v(x) (\I, f(x) \vv e_2),
\end{array} \]
where $\varphi(x) = (f(x),x)$. We also compute that
\begin{align*}
& (\I, -a e^{-s-t/2} \vv e_1) d(s + t/2)  v(x) (\I, (ax + b) \vv e_2) \\
& \qquad =  d(s+t/2) (\I, -a  \vv e_1) v(x) (\I, (ax + b) \vv e_2) \\
& \qquad = d(s+t/2) v(x) (\I, (-a, b)).
\end{align*}
Thus, the difference (with respect to the group operation of $\Aff(H)$) between $a(t,s) u(\varphi(x))$ and $\xi(t/2)d(s+t/2) v(x) (\I, (-a, b))$ is
\[ \xi(t/2) (\I, -a e^{-s-t/2} \vv e_1) \xi(-t/2) = (\I, -a e^{-s+t} \vv e_1), \]
which is exponentially close to the identity. Therefore, to prove Theorem \ref{thm:effective-ratner-main-result}, it suffices to show that
\[ \{ [\xi(t/2) d(s+t/2) v(x) (\I, (-a, b))]: x \in J \} \]
is $O(|J|^{-1}e^{-\eta t})$-equidistributed for some constant $\eta > 0$.

Note that 
\[
[d(s+t/2) v(x) (\I, (-a, b))] \in [\Aff(H)] \cong \SL(2,\R)\ltimes \R^2/\SL(2,\Z)\ltimes \Z^2,
\]
so we may apply the following result due to Str\"ombergsson \cite[Theorem~1.2]{Strombergsson2015}, which is an effective equidistribution theorem for $\SL(2,\R)\ltimes \R^2 / \SL(2, \Z) \ltimes \Z^2$.

\begin{thm}[Str\"ombergsson]
\label{thm:strombergsson}
Let $L = \SL(2, \R) \ltimes \R^2$ and $\Lambda = \SL(2, \Z) \ltimes \Z^2$. We denote an element in $L$ by $(h, \vv v)$ as above, where $h \in \SL(2, \R) $ and $\vv v \in \R^2$. For $h \in \SL(2,\R)$, let us simply denote $(h, \vv 0)$ by $h$ and treat $\SL(2,\R)$ as a subgroup of $L$.  Write
\[ d(s) := \begin{bmatrix} e^s & \\ & e^{-s} \end{bmatrix} \in \SL(2, \R) \subset L \]
and
\[ u(r) := \begin{bmatrix} 1 & r \\ 0 & 1 \end{bmatrix} \in \SL(2, \R) \subset L. \]
Let $\cI$ be a fixed compact subinterval of $\bR$. Then for any $\epsilon >0$, any subinterval $J$ of $\cI$, and any $\vv v =(v_1, v_2)$, the orbit
\begin{equation} \label{StromOrbit}
\{ [d(s)u(r)(\I, \vv v )] : r \in J \}
\end{equation}
is $O(|J|^{-1} (e^{-s/2} + b_s(\vv v))^{1-\epsilon})$-equidistributed in $L/\Lambda$. That is, there exists a constant $C = C(\eps, \cI) > 0$ such that for any $F \in C^{8}_b(L/\Lambda)$ we have
\[ \left| \frac{1}{|J|} \int_{J} F([d(s)u(r)(\I , \vv v) ]) \dd r - \int F \dd \mu_L \right|  \leq C\|F\|_{C^8_b}|J|^{-1} (e^{-s/2} + b_s(\vv v))^{1-\epsilon}, \]
where
\[b_s (\vv v) = \max_{q \in \bN} \: \min \left\{ \frac{1}{q^2}, \frac{e^{-s/2}}{q\|q v_1\|},\frac{e^{-s/2}}{q\|q v_2\| }  \right\}.\]
In particular, if $\vv v $ is Diophantine, so that there exists $\kap >0$ for which
\[ \inf_{q \in \bN}  q^{1+\kap} ( \|q v_1\| + \|q v_2\|)  > 0,\]
then the orbit \eqref{StromOrbit} is $O_{\eps, \cI}(|J|^{-1}e^{-\eta(1-\epsilon) s})$-equidistributed, where $\eta = (\kappa + 2)^{-1}$.
\end{thm}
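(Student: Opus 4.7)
My plan is to exploit the torus-bundle structure of $L/\Lam$ over $\SL(2,\Z)\backslash\SL(2,\R)$ by Fourier-expanding $F$ along the fibers. Using the group law $(g,\vv v)(\I,\vv m) = (g, \vv v + \vv m)$ for $\vv m \in \Z^2$, the function $\vv v \mapsto F((g,\vv v)\Lam)$ is $\Z^2$-periodic for each fixed $g \in \SL(2,\R)$, so one has
\[
F((g,\vv v)\Lam) \;=\; \sum_{\vv n \in \Z^2}\, \hat F(g,\vv n)\, e^{2\pi i \vv n \cdot \vv v},
\]
and the full $\Lam$-invariance forces the equivariance $\hat F(g\gam,\vv n) = \hat F(g, \gam^T \vv n)$ for $\gam \in \SL(2,\Z)$. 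Substituting, the integral to be estimated splits as
\[
\frac{1}{|J|}\int_J F([d(s)u(r)(\I,\vv v)])\, dr \;=\; \frac{1}{|J|}\int_J \hat F(d(s)u(r),\vv 0)\, dr \;+\; \sum_{\vv n \ne \vv 0} \frac{e^{2\pi i \vv n \cdot \vv v}}{|J|}\int_J \hat F(d(s)u(r),\vv n)\, dr.
\]

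For the zero mode, $\hat F(\cdot, \vv 0)$ descends to a $C^8_b$ function on $\SL(2,\Z)\backslash\SL(2,\R)$, and $\{d(s)u(r) : r \in J\}$ is an expanding closed horocycle segment of length $\asymp |J| e^{2s}$. Effective equidistribution of expanding horocycles in $\SL(2,\Z)\backslash\SL(2,\R)$---via Margulis's thickening method combined with the Selberg $3/16$ spectral gap, or by direct spectral analysis as in Flaminio--Forni and Str\"ombergsson---yields an error of size $O(\|F\|_{C^8_b} |J|^{-1} e^{-s/2+\epsilon s})$, providing the $e^{-s/2}$ term. For the non-zero modes, I would organise $\Z^2 \setminus \{\vv 0\}$ into $\SL(2,\Z)$-orbits and unfold the equivariance, rewriting the sum as a theta-type expression $\sum_{\vv m \ne \vv 0} \Phi(\vv m g)\, e^{2\pi i \vv m \cdot \vv v}$ (plus smooth corrections) for a Schwartz-class symbol $\Phi$ built from $F$. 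Since $\vv m\, d(s) u(r)$ is affine in $r$ with slope $(\vv m\, d(s))_1$, the $r$-integrals $\int_J \Phi(\vv m\, d(s)u(r))\, e^{2\pi i \vv m \cdot \vv v}\, dr$ are oscillatory, and integration by parts yields decay controlled by $\min(1, 1/(|J||(\vv m\,d(s))_1|))$; the resonant $\vv m$---those for which $\vv m\,d(s)$ lies close to $\vv v \bmod \Z^2$ at some denominator scale $q$---are precisely those enumerated by the three competing minima in $b_s(\vv v)$.

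The principal obstacle will be the lattice-point accounting that packages the resonant and non-resonant $\vv m$'s into the single quantity $b_s(\vv v)$, while simultaneously maintaining enough decay in $|\vv n|$ to sum absolutely; this balance dictates the appearance of the Sobolev norm $\|F\|_{C^8_b}$ (eight derivatives of $F$ give Fourier decay like $|\vv n|^{-8}$, outpacing the polynomial density of frequencies) and forces the $\epsilon$-loss in the exponent. The Diophantine specialisation is then elementary: the hypothesis $q^{1+\kap}(\|qv_1\| + \|qv_2\|) \gg 1$ lets one optimise the three minima defining $b_s(\vv v)$ to obtain $b_s(\vv v) \ll e^{-s/(2(\kap+2))}$, and hence the rate $|J|^{-1}e^{-\eta(1-\epsilon)s}$ with $\eta = (\kap + 2)^{-1}$.
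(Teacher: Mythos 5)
First, note that the paper does not prove this statement at all: it is imported verbatim as \cite[Theorem 1.2]{Strombergsson2015}, so the only ``proof'' in the paper is a citation. Your sketch is therefore being measured against Str\"ombergsson's original argument, and at the level of strategy it matches that argument well: Fourier expansion along the $\R^2/\Z^2$ fibres, the $\SL(2,\Z)$-equivariance $\hat F(g\gam, \vv n) = \hat F(g, \gam^T\vv n)$, the zero mode handled by effective equidistribution of long horocycle segments in $\SL(2,\Z)\backslash\SL(2,\R)$ (producing the $e^{-s/2}$ term up to an $\eps$-loss), and the non-zero modes unfolded into lattice sums whose resonances are governed by the Diophantine properties of $\vv v$.

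As a proof, however, the proposal has a genuine gap, and you have named it yourself: the ``lattice-point accounting that packages the resonant and non-resonant $\vv m$'s into the single quantity $b_s(\vv v)$'' is not a side obstacle but the entire technical content of the theorem. Everything that distinguishes this result from soft equidistribution---the precise shape of $b_s(\vv v)$ as a maximum over $q$ of three competing minima, the uniformity in $\vv v$ and in the subinterval $J$, and the fact that eight derivatives suffice---lives in that step, and the sketch gives no indication of how the quantities $q^{-2}$, $e^{-s/2}/(q\|qv_1\|)$ and $e^{-s/2}/(q\|qv_2\|)$ actually emerge from the oscillatory integrals, nor how the sum over non-resonant frequencies is closed. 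There is also a small but consequential slip in the one step you do carry out: under the Diophantine hypothesis one has $\max(\|qv_1\|,\|qv_2\|) \gg q^{-1-\kap}$, and balancing $q^{-2}$ against $e^{-s/2}q^{\kap}$ at $q \asymp e^{s/(2(\kap+2))}$ gives $b_s(\vv v) \ll q^{-2} \asymp e^{-s/(\kap+2)}$, not $e^{-s/(2(\kap+2))}$ as you wrote; your stated intermediate bound would only yield the exponent $(2(\kap+2))^{-1} = \eta/2$ and would not recover the theorem's conclusion.
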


\par In addition to Str\"ombergsson's effective equidistribution theorem, we require the following result.
\begin{thm}
\label{thm:effective-horospherical-subgroup}
There exists a constant $\eta_1 >0$ such that for any $t >0 $, the orbit
\[ \{ [\xi(t/2) (\I, \vv v) ] : \vv v \in [0,1]^2  \} \]
is $O(e^{-\eta_1 t})$-equidistributed in $X$, that is, there exists $m \in \N$ such that for any $F \in C^m_b(X)$, 
\[ \left| \int_{[0,1]^2} F([\xi(t/2) (\I, \vv v) ]) \dd \vv v - \int_X F \dd \mu_G \right| \ll \|F\|_{C^m_b} e^{-\eta_1 t}. \]
\end{thm}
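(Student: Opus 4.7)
The key structural observation is that direct computation yields
\[
\xi(t/2)(\I, \vv v) \xi(-t/2) = (\I, e^{3t/2} \vv v),
\]
so $U := \{(\I, \vv v) : \vv v \in \R^2\}$ is the full expanding horospherical subgroup of $\xi(t/2)$ in $G = \SL(3,\R)$. Theorem~\ref{thm:effective-horospherical-subgroup} is thus an effective equidistribution statement for translates of the compact $U$-orbit $T := \{[(\I,\vv v)] : \vv v \in [0,1]^2\} \subset X$ under the diagonal flow $\xi(t/2)$---the classical setting of Margulis's thickening method---which I would combine with exponential decay of matrix coefficients of $\xi(t/2)$ on $L^2_0(X) := \{f \in L^2(X) : \int f \dd\mu_G = 0\}$, available from Kazhdan's property~(T) of $\SL(3,\R)$.

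Concretely, let $W := CU^-$ denote the $6$-dimensional complement of $U$ in $G$, where $C$ is the $4$-dimensional centraliser of $\xi(t/2)$ and $U^-$ is the contracting horospherical subgroup, and fix $\epsilon \in (0,1)$ to be optimised. Construct $\Theta_\epsilon \in C^\infty_c(X)$ of mass $1$ thickening $T$ in the $W$-direction by $\epsilon$: in the Bruhat-type chart $(\vv v, w) \mapsto [(\I, \vv v) \exp(w)]$ near $T$, take $\Theta_\epsilon$ of product form $\tilde\chi(\vv v)\rho(w)$, where $\tilde\chi$ smoothly approximates $\mathbf{1}_{[0,1]^2}$ at scale $\epsilon$ and $\rho$ is an $\epsilon$-bump of mass $1$ on $W$; this gives $\|\Theta_\epsilon\|_{C^k_b} \ll \epsilon^{-6-k}$. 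The aim is to prove
\begin{equation}
\label{pl:thick}
\int_{[0,1]^2} F([\xi(t/2)(\I,\vv v)]) \dd \vv v = \langle F \circ \xi(t/2), \Theta_\epsilon\rangle_{L^2(X)} + O\bigl(\epsilon (\|F\|_\infty + \|F\|_{C^1_b})\bigr).
\end{equation}
Combining \eqref{pl:thick} with effective mixing applied to $F_0 := F - \int_X F \dd \mu_G$ (and using $\int_X \Theta_\epsilon \dd\mu_G = 1$) would then yield a total error of
\[
O\bigl(\epsilon \|F\|_{C^1_b} + e^{-\lambda t} \epsilon^{-6-k} \|F\|_{C^k_b}\bigr)
\]
for some absolute $\lambda > 0$ and $k \in \N$; optimising $\epsilon \asymp e^{-\lambda t/(7+k)}$ delivers the theorem with $\eta_1 = \lambda/(7+k)$ and $m = k$.

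The main technical challenge is establishing \eqref{pl:thick}. The naive estimate
\[
d_X\bigl([\xi(t/2)(\I,\vv v)\exp(w)],\, [\xi(t/2)(\I,\vv v)]\bigr) \ll \epsilon e^{3t/2}
\]
obtained from the bracket relations $[\mathfrak{u}^+, \mathfrak{u}^-] \subset \mathfrak{c}$ and $[\mathfrak{u}^+, \mathfrak{c}] \subset \mathfrak{u}^+$ (the second-order $\mathfrak{u}^+$-component of the perturbation being amplified by $e^{3t/2}$ upon conjugation by $\xi(t/2)$) is far too weak. The classical resolution for closed horospherical orbits is that this offending $\mathfrak{u}^+$-component corresponds to an infinitesimal shift of the orbit parameter $\vv v$ along the torus $T = [0,1]^2/\Z^2$, and is therefore absorbed by the translation-invariant $\vv v$-integration; only the genuine transverse $\mathfrak{c}$- and $\mathfrak{u}^-$-contributions, both of size $O(\epsilon)$, produce a real thickening error. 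Making this absorption quantitative, together with the standard conversion between Sobolev and $C^k_b$ norms on the mixing side, completes the proof in a manner analogous to the horospherical equidistribution arguments underlying~\cite{Kleinbock-Margulis-Log-Laws} and references therein.
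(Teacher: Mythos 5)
Your proposal is correct and is exactly the route the paper takes: the paper does not prove Theorem \ref{thm:effective-horospherical-subgroup} in-house but observes, as you do, that $\{(\I,\vv v)\}$ is the expanding horospherical subgroup of $\xi(t/2)$ and that $\SL(3,\R)$ has property (T), and then invokes the standard Margulis thickening argument via \cite[Theorem 1.3]{Klein-Mar-Effective-Equid}. Your sketch fills in that argument faithfully, including the one genuinely delicate point (absorbing the second-order $\mathfrak{u}^+$-component of the transverse perturbation into the $\vv v$-integral over the closed torus orbit), so there is nothing to object to.
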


\noindent This theorem can be proved using the standard ``thickening'' method developed in Margulis's thesis \cite{Margulis2004}, noting that $\{(\I, \vv v): \vv v \in \R^2\}$ is the expanding horospherical subgroup of $\xi(t)$, and that $\SL(3, \R)$ has Kazhdan's property (T). The reader is referred to \cite[Theorem 1.3]{Klein-Mar-Effective-Equid} for a proof.

\par We are now equipped to prove Theorem \ref{thm:effective-ratner-main-result}.

\begin{proof}[Proof of Theorem \ref{thm:effective-ratner-main-result}]

By the preceding discussion, it remains to show that
\[ \{ [\xi(t/2) d(s+t/2) v(x) (\I, (-a, b))]: x \in J \} \]
is $O(|J|^{-1}e^{-\eta t})$-equidistributed for some constant $\eta > 0$. 

We begin by considering $d(s + t/2)  v(x) (\I, (-a,b))$. Since $(a,b)$ being Diophantine implies that $(-a, b)$ is Diophantine, we conclude from Theorem \ref{thm:strombergsson} that the orbit
 \[ \{ [ d(s + t/2)  v(x) (\I, (-a,b))]: x \in J \} \]
 is $O(|J|^{-1} e^{-\tilde{\eta} s})$-equidistributed in $[\Aff(H)]$, for some constant $\tilde{\eta} >0$.

We may assume that $\eta < \tilde{\eta}$. As the desired conclusion is trivial when $|J| < e^{-\eta t}$, we may also assume that $|J| \geq e^{-\eta t} \geq e^{-\tilde{\eta} t}$. We shall choose $c < 1/2$ in Theorem \ref{thm:effective-ratner-main-result}. Now
 \[ |J|^{-1} e^{-\tilde{\eta} s} \leq e^{\tilde{\eta} t - \tilde{\eta} s } \leq e^{\tilde{\eta} c s - \tilde{\eta} s} \leq e^{-\tilde{\eta} s /2}, \]
and so the orbit 
 \[ \{ [ d(s + t/2)  v(x) (\I, (-a,b))]: x \in J \} \]
 is $O(e^{-\tilde{\eta} s/2})$-equidistributed in $[\Aff(H)]$.

 \par Let us fix a fundamental domain $\mathcal{F}_0 \subset \SL(2, \R)$ for $\SL(2, \R)/\SL(2, \Z)$. For any $\epsilon >0$, define
\[
K_\eps = \{ \Lam \in \SL(2, \R)/\SL(2, \Z): \Lam \cap B_\eps(\bzero) = \{ \bzero \} \}
\]
and
\[ \mathcal{K}_{\epsilon} := \{ g \in \mathcal{F}_0: [g] \in K_{\epsilon}\}.\]
For constants $\eta_3 > 2 \eta_2 >0$ to be determined later, we divide $\mathcal{K}_{e^{-\eta_2 t}}$ into small pieces of radius $e^{-\eta_3 t}$:
\[ \{ \mathcal{P}_i: i=1, \dots, \cL \}. \]
Without loss of generality, we may assume that every $\mathcal{P}_i$ has the same measure, considering the Haar measure $\mu_H$ on $H \cong \SL(2, \R)$. Note from \cite[Proposition 3.5]{Klein-Mar-Effective-Equid} that the injectivity radius of $\cK_{\eps}$ is $r(\cK_\eps) \gg \eps^2$, so the fact that $\eta_3 > 2 \eta_2$ enables us to perform this subdivision. Since $\cP_i$ is three-dimensional, we now have
\begin{equation}
\label{equ:size-cP_i}
\mu_H(\cP_i) \asymp e^{-3\eta_3 t}.
\end{equation}

Let 
\[ \pi_1 :  \SL(2, \R)\ltimes \R^2 / \SL(2, \Z) \ltimes \Z^2 \to \SL(2,\R)/\SL(2,\Z)\]
denote the projection mapping that sends $[(g, \vv v)] \in \SL(2, \R)\ltimes \R^2 / \SL(2, \Z) \ltimes \Z^2$ to its first component $[g] \in \SL(2,\R)/\SL(2,\Z)$. For each $i$, let
\[ J_i = \{ x \in J:  \pi_1([ d(s + t/2)  v(x) (\I, (-a,b))]) \in [\mathcal{P}_i]\}. \]
By Theorem \ref{thm:strombergsson}, there exists a constant $\tilde{\eta}' >0$ such that
\begin{equation} 
\label{equ:size_J_i}
|J_i| / |J| = \mu_H(\mathcal{P}_i) +O(e^{-\tilde{\eta}' s}). 
\end{equation}
Indeed, this can be formally established by approximating $1_{[\cP_i]}\circ \pi_1 $ by a smooth function $\tilde{1}_{[\cP_i]} \in C^8_b([\Aff(H)])$ with $O(e^{-\tilde{\eta}' s})$-error and applying Theorem \ref{thm:strombergsson} to $\tilde{1}_{[\cP_i]}$. Since the smoothing is standard---see \cite[\S 3]{Kleinbock-Margulis-Log-Laws-Erratum} for instance---we omit the details.
\par We shall choose $c>0$ small enough so that $3\eta_3 c < \tilde{\eta}'/2 $. This ensures that
\[ e^{-3\eta_3 t} \geq e^{-3\eta_3 c s} > e^{-\tilde{\eta}' s/2}. \]
 Then by \eqref{equ:size-cP_i}, we have
 \begin{equation}
 \label{equ:size_J_i_2}
 |J_i|/|J| \asymp e^{-3\eta_3 t}.
 \end{equation}
By \cite[Proposition 7.1]{Kleinbock-Margulis-Log-Laws}, we have
\[
\mu_H(\mathcal{F}_0 \setminus \mathcal{K}_{e^{-\eta_2 t}} ) = O(e^{-2\eta_2 t}).
\]
Thus, in order to show that
\[ \{[ \xi(t/2) d(s + t/2)  v(x) (\I, (-a,b))]: x \in J\} \]
is $O(|J|^{-1}e^{-\eta t})$-equidistributed, it suffices to show that for each $i$, the orbit
\[ \{[ \xi(t/2) d(s + t/2)  v(x) (\I, (-a,b))]: x \in J_i \}\]
is $O(|J|^{-1}e^{-\eta t})$-equidistributed. 
\par We now focus our attention on some $J_i$, and forge ahead with our analysis of $d(s+t/2)v(x)(I,(-a,b))$. Let
\[\pi_2 : \SL(2, \R)\ltimes \R^2 / \SL(2, \Z) \ltimes \Z^2 \to \R^2/\Z^2\]
denote the projection mapping that sends $[(g, \vv v)] \in \SL(2, \R)\ltimes \R^2 / \SL(2, \Z) \ltimes \Z^2$ (where $g \in \cF_0$) to its second component $[\vv v] \in \R^2/\Z^2$. By Theorem \ref{thm:strombergsson}, we have that the second component of
\[ \{ [ d(s + t/2)  v(x) (\I, (-a,b))] : x \in J_i \}\]
is $O(|J_i|^{-1}e^{-\eta' s})$-equidistributed in $\R^2/\Z^2$ for some constant $\eta' >0$, that is, for any $f \in C^8_b(\R^2/\Z^2)$ we have
\begin{equation*}
 \left| \frac{1}{|J_i|} \int_{J_i} f(\pi_2 ([g(x)])) \dd x - \int_{\R^2/\Z^2} f([\vv v]) \dd \vv v \right| \ll |J_i|^{-1} \|f\|_{C^8_b} e^{-\eta' s }, \end{equation*}
where $g(x) = d(s + t/2)  v(x) (\I, (-a,b))$. Similarly to \eqref{equ:size_J_i}, this can be formally established by approximating $(1_{[\cP_i]}\circ \pi_1 ) (f \circ \pi_2)$ by a smooth function $\tilde{f}_i \in C^8_b([\Aff(H)])$ with $O(e^{-\eta' s})$-error and applying Theorem \ref{thm:strombergsson} to $\tilde{f}_i$. 

Next, let us fix some $g_i \in \mathcal{P}_i \subset G$. For any $x \in J_i$, the first component of
\[ [ d(s + t/2)  v(x) (\I, (-a,b))] \]
can be written as $O_\fe(e^{-\eta_3 t}) g_i$, where $\fe \in \mathrm{SL}(3,\bR)$ denotes the identity. Here, and in the calculation below, we write $O_\fe(\mathfrak{r})$ for an element of a neighbourhood of $\fe$ whose radius is $O(\mathfrak{r})$. Note that $\xi(t/2)$ commutes with $(g, \vv 0)$, so for $x \in J_i$ we have
\[
\begin{array}{cl}
  [ \xi(t/2) d(s + t/2)  v(x) (\I, (-a,b))]
& = [\xi(t/2)(O_\fe(e^{-\eta_3 t})g_i, \vv v(x) )] \\
 &=  [\xi(t/2) ( O_\fe(e^{-\eta_3 t})g_i, \vv 0 ) (\I , \vv v(x)) ]\\
 &= [ ( O_\fe(e^{-\eta_3 t})g_i, \vv 0 ) \xi(t/2) (\I , \vv v(x)) ] ,
\end{array}\]
where $\vv v(x)$ denotes the second component of
\[
[ d(s + t/2)  v(x) (\I, (-a,b))] .
\]
It therefore remains to show that
\[ \{ [ (g_i , \vv 0) \xi(t/2)(\I, \vv v(x)) ] : x \in J_i \} \]
is $O(|J|^{-1}e^{-\eta t})$-equidistributed.\\

Since $g_i \in \mathcal{K}_{e^{-\eta_2 t}}$, there exists a constant $N_1 >1$ such that $\|g_i\| \ll e^{N_1 \eta_2 t}$. Let $m$ be the positive integer given in Theorem \ref{thm:effective-horospherical-subgroup}. By replacing $m$ with a larger integer if needed, we may assume that $m \geq 8$. Given $F \in C^m_b(X)$ such that $\|F\|_{C^m_b} \leq 1$, we need to show that
\begin{equation}  
\label{equ:goal-estimate}
\left|\frac{1}{|J_i|} \int_{J_i} F([ (g_i , \vv 0) \xi(t/2)(\I, \vv v(x)) ]) \dd x - \int_X F \dd \mu_G \right| \ll |J|^{-1}e^{-\eta t} \end{equation}
holds for some constant $\eta >0$. The triangle inequality gives
\[
 \left|\frac{1}{|J_i|} \int_{J_i} F([ (g_i , \vv 0) \xi(t/2)(\I, \vv v(x)) ]) \dd x - \int_X F \dd \mu_G \right| \le X_1 + X_2,
\]
where
\[
X_1 = \left| \frac{1}{|J_i|} \int_{J_i} F([ (g_i , \vv 0) \xi(t/2)(\I, \vv v(x)) ]) \dd x - \int_{[0,1]^2}F([ (g_i , \vv 0) \xi(t/2)(\I, \vv v) ]) \d \bv \right| \]
and
\[
X_2 = \left| \int_{[0,1]^2}F([ (g_i , \vv 0) \xi(t/2)(\I, \vv v) ]) \d \bv - \int_X F \dd \mu_G \right|.
\]
\par We begin by estimating $X_1$. For $g \in G$, we define $F_g \in C^m_b(X)$ by
\[
F_g (x) = F(g x),
\]
and note that
\[\|F_g\|_{C^m_b} \ll \|g\|^{m} \|F\|_{C^m_b} \le \|g\|^m.\]
We have
\[
X_1 = \left| \frac{1}{|J_i|} \int_{J_ i} F_{h_1} ([(\I, \vv v(x))]) \dd x - \int_{[0,1]^2} F_{h_1}([(\I, \vv v)])  \d \bv \right|,
\]
where $h_1 = (g_i, \vv 0) \xi(t/2)$. We choose $\eta_2 >0$ small enough so that $N_1 \eta_2 < 1$; this ensures that $\|h_1\| \ll e^{2t}$. As
\[ \{ \vv v(x) : x \in J_i\} \]
is $O(|J_i|^{-1}e^{-\eta' s})$-equidistributed in $\R^2/\Z^2$, we obtain
\[
X_1 \ll |J_i|^{-1}  e^{-\eta' s} \|F_{h_1}\|_{C^m_b}  \ll |J|^{-1} e^{3\eta_3 t} e^{-\eta' s} \|h_1\|^m
 = |J|^{-1} e^{(3\eta_3 + 2m) t  -\eta' s}.
\]
We choose $c >0$ small enough such that $(3\eta_3 + 2m) c < \eta'/2$. Now, for $t \le cs$, we have
 \begin{equation}
 \label{equ:estimate-first-term}
 X_1 \ll |J|^{-1} e^{-\eta_4 s} \leq |J|^{-1} e^{-c^{-1}\eta_4 t} , \end{equation}
 where $\eta_4 = \eta'/2$.

It remains to estimate $X_2$. Since $\mu_G$ is $G$-invariant, we have
 \[\int_X F \dd \mu_G = \int_X F_g \dd \mu_G\]
 for any $g \in G$. Therefore
 \[
X_2 = \left| \int_{[0,1]^2}F_{h_2}([\xi(t/2)(\I, \vv v) ])   \d \bv - \int_X F_{h_2} \dd \mu_G \right|,
\]
 where $h_2 = (g_i, \vv 0)$. As $\|h_2\| \ll e^{N_1 \eta_2 t}$, Theorem \ref{thm:effective-horospherical-subgroup} now gives
 \[
 X_2
    \ll e^{-\eta_1 t} \|F_{h_2}\|_{C^m_b} \ll e^{-\eta_1 t} \|h_2\|^m \ll e^{- \eta_1 t + mN_1 \eta_2 t}.
\]
 We may choose $\eta_2 >0$ small enough such that $ m N_1 \eta_2 <  \eta_1/2$, and so
 \begin{equation}
 \label{equ:estimate-second-term}
X_2 \ll e^{-\eta_5 t} \ll |J|^{-1} e^{-\eta_5 t},
 \end{equation}
 where $\eta_5 = \eta_1/2$.
 \par Combining \eqref{equ:estimate-first-term} and \eqref{equ:estimate-second-term}, we obtain \eqref{equ:goal-estimate} with $\eta = \min\{ c^{-1}\eta_4, \eta_5 \}$, and thus complete the proof.
\end{proof}


 \section{Multiplicative Diophantine approximation on planar lines}
 \label{sec-MDA-lines}

In this section, we complete the proof of Theorem \ref{thm:mda-result} using Theorem~\ref{thm:effective-ratner-main-result} and techniques from homogeneous dynamics.

Let $\cL = L_{a,b}$, as in \eqref{Lab}. We begin by dealing with the non-Diophantine case. This is a routine consequence of Kleinbock's work \cite{Klein2003} on extremal subspaces, from which we presently recall some standard definitions. A pair $(\alp, \bet) \in \bR^2$ is \emph{very well multiplicatively approximable} (VWMA) if for some $\eps > 0$ there exist infinitely many $n \in \bN$ for which
\[
\langle n \alp \rangle  \langle n \bet \rangle \le n^{-1-\eps}.
\]
(We have actually taken an equivalent definition from \cite{Kleinbock-Margulis-Log-Laws}---see the introduction of that paper for a discussion of this equivalence.) The line $\cL$ is \emph{strongly extremal} if almost all points in $\cL$ are not VWMA.

If $(a,b)$ is not Diophantine then $\ome(a,b) = \infty$. By \cite[Corollary 5.7]{Klein2003}, the line $\cL$ is then not strongly extremal so, by Theorem 5.5 therein, \textbf{all} points in $\cL$ are VWMA. The upshot is that, in this non-Diophantine case, the approximation rate \eqref{TwoLogs} is valid for all $(\alp, \bet) \in \cL$.

Having dealt with the non-Diophantine case of Theorem \ref{thm:mda-result}, we assume henceforth that $(a,b)$ is Diophantine.

\subsection{Overview}
\label{overview}

We hope that this section will serve as a general framework for deducing divergence statements in metric Diophantine approximation using effective equidistribution theorems. There are six core steps in our proof.

\begin{enumerate}

\item \textbf{Dyadic pigeonholing and a homogeneous space.} Let 
\[
f(x) = ax+b
\]
be our Diophantine linear function. We dyadically pigeonhole at levels $t,s \in \bN$:
\[
 \langle n f(x) \rangle \asymp e^{-t}, \qquad \langle nx \rangle \asymp e^{-s}.
\]
Our multiplicatively well-approximable points at these dyadic levels occur when the lattice
\[
a(t,s) u(f(x),x) \bZ^3
\]
contains a non-zero vector whose norm is at most $\eps (s+t)^{-2/3}$, where $a(t,s)$ is a particular diagonal matrix and $u(f(x),x)$ is a particular unipotent matrix. This enables us to work in the homogeneous space $\SL_3(\R)/ \SL_3(\Z)$ of unimodular lattices in $\bR^3$. We only consider when $s \asymp t$; this provides us with sufficiently many good approximations.

\item \textbf{Local divergence Borel--Cantelli.} Collecting together the well-approximable points $(f(x),x)$ at scale $(t,s)$, we obtain a limit superior set of a collection of sets $B_\eps(t,s)$. To show it has full measure, it suffices to show that its ``local'' measures are positive; these are induced probability measures on subintervals $J$. We apply divergence Borel--Cantelli for this purpose. We ``prune'' to $B^*(t,s) \subseteq B_\varepsilon(t,s) \cap J$ being the union of separated subintervals of a suitable length. Our task is now to establish quasi-independence on average for the sets $B^*(t,s)$, where $t+s$ exceeds a threshold $T_1(J)$.

\item \textbf{The non-critical case.} Here $2s+t$ is not close to $2s' + t'$. In this case it suffices to simply choose $(t,s)$ before choosing $(t',s')$, and we obtain
\[
|B^*(t,s) \cap B^*(t',s')| \ll |J|^{-1} |B^*(t,s)| \cdot |B^*(t',s')|.
\]
\item \textbf{The critical case, a product formula, and smoothing.} Here $2s+t$ is close to $2s' + t'$. Since $s \asymp t$, we are able to infer that $s \asymp t \asymp s' \asymp t'$. Considering the two lattices, the distinction is left-multiplication by the matrix $g = a(t'-t, s'-s)$, and $\| g \| \le e^{2(c_2-c_1)s}$ is not too large. We smoothly approximate the indicator function of $B^*(t,s) \cap B^*(t',s')$ by $F$, where $F(x) = F_\ell (x) F_{\ell'}(gx)$ with $\ell = s+t$, $\ell' = s'+t'$, and $F_\ell$ being a smoothing of  the indicator function of some subset (defined by the parameter $\ell$) near the cusp. The smoothing ensures that $F$ has small complete bounded and Sobolev norms.

\item \textbf{Effective equidistribution.} By our principal result, Theorem \ref{thm:effective-ratner-main-result}, the mean of $F$ over $J$ is roughly the mean of $F$ over the entire homogeneous space. The error is exponentially-decaying in $s$ and requires control of a complete bounded norm.

\item \textbf{Exponential mixing.} By work of Kleinbock and Margulis \cite{Kleinbock-Margulis-Log-Laws}, the mean of $F$ is roughly the mean of $F_\ell$ times the mean of $F_{\ell'}$. The error is exponentially-decaying in $\max\{|s-s'|, |t-t'|\}$ and requires control of a Sobolev norm. The latter two means are as expected, owing to the careful smoothing, and we obtain 
\[
|B^*(t,s) \cap B^*(t',s')| \approx |J|^{-1} |B^*(t,s)| \cdot |B^*(t',s')|,
\]
 up to a constant multiplicative error and an exponentially-small additive error.

\end{enumerate}

Step (1) is a completely classical passage; Steps (2) and (3) are very much in the spirit of Beresnevich--Haynes--Velani~\cite{Beresnevich-Haynes-Velani2015} and the preceeding work on measure-theoretic laws for limsup sets~\cite{BDV2006}; and Steps (4), (6) are standard after Kleinbock--Margulis~\cite{kleinbock1996bounded, Kleinbock-Margulis-Log-Laws, Kleinbock-Margulis-Log-Laws-Erratum}. The crucial ingredient, used in Step (5), is our new effective equidistribution theorem. In the ensuing two subsections, we carry out the strategy by supplying concrete details.


 \subsection{Diophantine approximation to homogeneous dynamics}
 \label{subsec-DA-to-HD}
 \par The purpose of this subsection is to explain how to translate the problem in multiplicative Diophantine approximation to a problem in homogeneous dynamics.
 \par Let $G = \SL_3(\R)$ and $\Gamma = \SL_3(\Z)$. Then the homogeneous space $X = G/\Gamma$ parametrises the space of unimodular lattices in $\R^3$, where $[g]$ corresponds to the lattice $g \Z^3$. For $\epsilon >0$, let
 $B_{\epsilon}(\vv 0)$ denote the closed ball of radius $\epsilon$ and centred at $\vv 0$, with respect to the supremum norm. Let us define
 \[ K_{\epsilon} := \{ \Lambda \in X : \Lambda \cap B_{\epsilon} (\vv 0) = \{ \vv 0\} \}.\]
 Mahler's criterion asserts that $K_{\epsilon}$ is compact, and that every compact subset of $X$ is contained in some $K_{\epsilon}$. For $\vv x = (x_1, x_2 )\in \R^2$, define
 \[ u(\vv x) = \begin{bmatrix} 1 & & x_1 \\ & 1 & x_2 \\ & & 1 \end{bmatrix} \in G. \]
 For $s\geq 0$ and $t \geq 0$, define
 \[ a(t,s) = \begin{bmatrix} e^{t} & & \\ & e^s & \\ & & e^{-s-t} \end{bmatrix} \in G. \]

Let us fix a Diophantine vector $(a,b) \in \bR^2$, and write $f(x) := ax + b$. Then $L_{a,b}$ is given by
 \[ \{ \varphi(x) = (f(x), x): x \in \R \}. \]
It suffices to consider a compact segment $\{ \varphi(x): x \in \cI \}$ of $L_{a,b}$, where $\cI$ is an arbitrary fixed compact interval in $\bR$. For $t >0$, $s >0$ and $\eps \in (0,1)$, let
 \[ B_{\epsilon}(t,s) = \{ x \in \cI : [a(t,s) u(\varphi(x))] \in X\setminus K_{\epsilon \ell^{-2/3}}\}, \]
 where $\ell = s+t$.
 \par By definition, for any $x \in B_{\epsilon}(t,s)$ there exists $\vv v = (p_1, p_2, n) \in \Z^3\setminus\{\vv 0\}$ such that
 \[ \| a(t,s)u(\varphi(x))\vv v\| \leq \epsilon\ell^{-2/3}, \]
where $\|\cdot\|$ denotes the supremum norm. Therefore
\begin{equation} \label{FirstTwo}
|e^t (f(x)n + p_1 )| \leq \epsilon \ell^{-2/3}, \qquad |e^s ( x n + p_2)| \leq \epsilon \ell^{-2/3}
\end{equation}
 and
 \[ e^{-t-s} |n| \leq \epsilon \ell^{-2/3}. \]
 Without loss of generality, we may assume that $n >0$. From \eqref{FirstTwo}, we see that $|f(x) n + p_1| = \langle f(x) n\rangle$ and $|x n + p_2| = \langle x n\rangle$. Therefore
 \[ 0 <  n \leq e^{t+s} \epsilon \ell^{-2/3},  \qquad \langle n x\rangle \leq \epsilon \ell^{-2/3} e^{-s}
\]
and
\[
\langle n f(x)\rangle \leq \epsilon \ell^{-2/3} e^{-t},
\]
all of which implies that
\[ n \langle n x\rangle  \langle n f(x)\rangle \leq  \epsilon^3 \ell^{-2} \leq \epsilon^3 (\log n)^{-2}.\]
Let us take countably many $(t_k , s_k)$ such that $t_k + s_k \to +\infty$. Then, for any $\displaystyle x \in \limsup_{k \to \infty} B_{\epsilon}(t_k, s_k)$, we have
\[ \liminf_{n \to \infty} n (\log n)^2 \langle n x \rangle  \langle n f(x)\rangle \leq \epsilon^3. \]
Next, we choose a sequence $\epsilon_j \to 0$. For any $\displaystyle x \in \bigcap_{j=1}^{\infty} \limsup_{k \to \infty} B_{\epsilon_j}(t_k, s_k)$, we have
\[ \liminf_{n \to \infty} n (\log n)^2 \langle n x\rangle  \langle n f(x)\rangle = 0. \]
Thus, in order to prove Theorem \ref{thm:mda-result}, it suffices to show that if $\epsilon >0$ then $\displaystyle \limsup_{k \to \infty} B_{\epsilon}(t_k, s_k)$ has full measure. We carry this out in the next section, using the divergence Borel--Cantelli lemma.
\par Let $0 < c_1 < c_2$ be two constants which will be determined later, such that $c_2 \leq c$, where $c \in (0,1)$ is the constant that we get from Theorem \ref{thm:effective-ratner-main-result}. We will choose $\mathcal{R} = \{(t_k, s_k): k \in \Z^+\}$ as follows:
\begin{equation}
\label{equ:net-on-diagonal-group}
\mathcal{R} : = \{ (t, s): t , s \in \Z^+ :  c_1 s \leq t \leq c_2 s \}.
\end{equation}
 Since $\mathcal{R}$ is countable, we can order it as $\{(t_k, s_k): k \in \Z^+\}$.
\par We henceforth fix $\eps \in (0,1)$. In summary, to establish Theorem \ref{thm:mda-result} it suffices to prove the following.
\begin{prop}
\label{prop:limsup}
\par Let $B_{\epsilon}(t, s)$ and $\mathcal{R} = \{(t_k, s_k): k \in \Z^+\}$ be as above. Then, for any $\epsilon >0$, we have
\[ |\limsup_{k \to \infty} B_{\epsilon}(t_k, s_k)| =  |\cI| . \]
\end{prop}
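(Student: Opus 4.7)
The plan is a localised divergence Borel--Cantelli argument. It suffices to show that for every open subinterval $J \subseteq \cI$,
\[
|J \cap \limsup_{k \to \infty} B_\eps(t_k,s_k)| \gg |J|;
\]
a standard Lebesgue density argument then upgrades this to the full-measure statement on $\cI$. Setting $\ell = s+t$, I would first prune $B_\eps(t,s) \cap J$ to a disjoint union $B^*(t,s)$ of well-separated subintervals of length comparable to the transverse scale $e^{-(2s+t)} \eps \ell^{-2/3}$, arising from the fibres of $x \mapsto [a(t,s) u(\varphi(x))]$ over short vectors in the corresponding lattice. Counting admissible denominators $n \asymp e^{\ell}$ and using the Diophantine property of $(a,b)$ to exclude near-coincidences then yields $|B^*(t,s)| \asymp \eps^3 |J| \ell^{-2}$, so that the cone condition $c_1 s \leq t \leq c_2 s$ defining $\cR$ ensures
\[
\sum_{\substack{(t,s) \in \cR \\ \ell \geq T}} |B^*(t,s)| = +\infty \quad \text{for every } T > 0.
\]
The remaining task is the quasi-independence-on-average bound
\[
\sum_{\substack{(t,s),(t',s') \in \cR \\ \ell,\ell' \geq T_1}} |B^*(t,s) \cap B^*(t',s')| \ll |J|^{-1} \Bigl( \sum_{\substack{(t,s) \in \cR \\ \ell \geq T_1}} |B^*(t,s)| \Bigr)^2.
\]

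I would split the double sum according to whether the scales $2s+t$ and $2s'+t'$ are far apart (the non-critical case) or close (the critical case). In the non-critical case, say $2s+t < 2s'+t'$ by a wide margin, I condition on the choice of $(t,s)$ and apply Theorem \ref{thm:effective-ratner-main-result} to the orbit under $a(t',s') u(\varphi(\cdot))$ restricted to each small subinterval comprising $B^*(t,s)$. A routine smoothing of the indicator of the obstruction at scale $(t',s')$ produces a test function with polynomial $C^m_b$-norm, and the equidistribution error is absorbed into the scale gap; this yields the desired product bound $|B^*(t,s) \cap B^*(t',s')| \ll |J|^{-1}|B^*(t,s)| \cdot |B^*(t',s')|$.

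The critical case, $2s+t \approx 2s'+t'$, will be the main obstacle. The cone restriction forces $s \asymp s' \asymp t \asymp t'$, and writing $g = a(t,s)a(t',s')^{-1}$ the two orbits differ by left-multiplication by $g$, whose operator norm is comparable to $e^{|t-t'|+|s-s'|}$. I would jointly smooth $1_{B^*(t,s)\cap B^*(t',s')}$ by a product
\[
F(x) = F_\ell\bigl([a(t,s) u(\varphi(x))]\bigr) \cdot F_{\ell'}\bigl([a(t',s') u(\varphi(x))]\bigr)
\]
with each factor in $C^m_b(X)$ and of controlled Sobolev norm, then apply Theorem \ref{thm:effective-ratner-main-result} to replace $|J|^{-1}\int_J F \, \dd x$ by $\int_X F \, \dd \mu_G$ up to an error exponentially small in $\min(s,s')$. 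Finally, Kleinbock--Margulis exponential mixing of the diagonal action (see \cite{Kleinbock-Margulis-Log-Laws}) decouples $\int_X F \, \dd \mu_G$ into the product $\int_X F_\ell \, \dd \mu_G \cdot \int_X F_{\ell'} \, \dd \mu_G$ up to an error exponentially decaying in $|t-t'|+|s-s'|$. The technical crux will be tuning the smoothing parameters so that the $C^m_b$ and Sobolev norms grow only polynomially in $\eps^{-1}$ and $\ell$, while both error terms remain summable against the double sum; pairs with $|t-t'|+|s-s'|$ too small for mixing to bite contribute negligibly because they are few in number and controlled by the trivial bound. Once this is achieved, divergence Borel--Cantelli delivers Proposition \ref{prop:limsup}.
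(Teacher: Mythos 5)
Your overall architecture --- localisation to a subinterval $J$ plus a Lebesgue density argument, pruning to separated sets $B^{\ast}(t,s)$, a critical/non-critical dichotomy in the scale $2s+t$, and in the critical case a smoothed product test function handled by Theorem \ref{thm:effective-ratner-main-result} followed by Kleinbock--Margulis exponential mixing --- coincides with the paper's. Your treatment of the critical case is essentially the paper's argument, including the observation that $c_2-c_1$ must be tuned so that $\|a(t'-t,s'-s)\|^{m}$ does not overwhelm the equidistribution gain, and your handling of near-diagonal pairs is an acceptable variant.

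The gap is in the non-critical case. You propose to apply Theorem \ref{thm:effective-ratner-main-result} at time $(t',s')$ with the interval of integration taken to be a single piece $I_1$ of $B^{\ast}(t,s)$, of length $\asymp e^{-(2s+t)}$. The error term of the theorem is then $|I_1|^{-1}e^{-\eta t'}\|F\|_{C^m_b}\gg e^{(2s+t)-\eta t'}$, and since $t'\le c_2 s'$ forces $t'<(2s'+t')/2$, this error is small only when $2s'+t'\gg \eta^{-1}(2s+t)$ --- a multiplicative gap by the large constant $2/\eta$, not merely the factor $1+c_3$ (with $c_3$ small) that the critical analysis can tolerate. Pairs with $(1+c_3)(2s+t)\le 2s'+t'\le 2\eta^{-1}(2s+t)$ are then covered by neither regime: equidistribution over $I_1$ is vacuous for them, while placing them in the critical case makes $\|g\|=\|a(t'-t,s'-s)\|$ as large as $e^{O(\eta^{-1}s)}$, so that $\|F_{\ell'}\circ g\|_{C^m_b}\ll\|g\|^{m}$ destroys the analogue of \eqref{equ:critical-line-integral}. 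The paper's non-critical case instead uses no dynamics: by construction each tiny interval of $B^{\ast}(t',s')$, of length $2e^{-(2s'+t')}$, lies inside a ``private'' interval of length $2\eps^{-3}(s'+t')^{2}e^{-(2s'+t')}$ containing no other piece of $B^{\ast}(t',s')$, so a direct count of how many pieces can meet $I_1$ already yields $|I_1\cap B^{\ast}(t',s')|\ll |J|^{-1}|I_1|\cdot|B^{\ast}(t',s')|$. This is precisely what the separation you build into the pruning is for, but you never invoke it; without it (or a substitute) the non-critical case does not close. A smaller point: your lower bound $|B^{\ast}(t,s)|\gg \eps^{3}|J|\ell^{-2}$ via ``counting denominators'' is left vague, whereas the paper derives it from Theorem \ref{thm:effective-ratner-main-result} together with the cusp-measure estimate $\mu_G(X\setminus K_{\theta})\asymp\theta^{3}$, which shows that a positive proportion of the Minkowski short vectors fall into the favourable configuration.
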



\subsection{Divergent part of Borel--Cantelli lemma}
\label{subsec-Borel-Cantelli-lemma}

We will use the divergent part of the Borel--Cantelli lemma to prove Theorem \ref{thm:mda-result}. The version stated below is from  \cite[Proposition 2]{BDV2006}.

\begin{lemma}
\label{lm:borel-cantelli-divergent}
\par Let $(\Omega, A, \nu)$ be a probability space, and let $(E_n)_{n=1}^\infty$ be a sequence of measurable sets such that $\displaystyle \sum_{n=1}^\infty \nu(E_n) = \infty$. Suppose there exists a constant $C >0$ such that
\begin{equation}
\sum_{n, m =1}^N \nu(E_n \cap E_m) \leq C \left( \sum_{n=1}^N \nu(E_n) \right)^2
\end{equation}
holds for infinitely many $N \in \bN$. Then
\[ \nu(\limsup_{n \to \infty} E_n) \geq \frac{1}{C}. \]
\end{lemma}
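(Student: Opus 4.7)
The plan is to apply a second-moment (Paley--Zygmund-type) argument via the Cauchy--Schwarz inequality, and then pass from partial unions to the limit superior by continuity of measure. Throughout, write $S_N := \sum_{n=1}^N \mathbf{1}_{E_n}$, so that $\int_\Omega S_N \dd\nu = \sum_{n=1}^N \nu(E_n)$ and $\int_\Omega S_N^2 \dd\nu = \sum_{n,m=1}^N \nu(E_n \cap E_m)$, which is the reason the hypothesis is phrased in this form.

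First I would handle the elementary Cauchy--Schwarz step. For any $M \le N$, set $T_{M,N} := \sum_{n=M}^N \mathbf{1}_{E_n}$ and let $A_{M,N} := \{T_{M,N} > 0\} = \bigcup_{n=M}^N E_n$. Since $T_{M,N}$ vanishes off $A_{M,N}$,
\[
\Bigl(\sum_{n=M}^N \nu(E_n)\Bigr)^{\!2} \;=\; \Bigl(\int_{A_{M,N}} T_{M,N} \dd\nu\Bigr)^{\!2} \;\le\; \nu(A_{M,N}) \int_{\Omega} T_{M,N}^2 \dd\nu \;\le\; \nu(A_{M,N}) \sum_{n,m=1}^N \nu(E_n \cap E_m),
\]
so applying the hypothesis along the infinite subsequence of admissible $N$ gives
\[
\nu\Bigl(\bigcup_{n=M}^N E_n\Bigr) \;\ge\; \frac{1}{C} \cdot \frac{\bigl(\sum_{n=M}^N \nu(E_n)\bigr)^2}{\bigl(\sum_{n=1}^N \nu(E_n)\bigr)^2}.
\]

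Next I would pass to the limit $N \to \infty$ along that subsequence for fixed $M$. Because $\sum_{n=1}^\infty \nu(E_n) = \infty$ and only the finitely many terms with $n < M$ are omitted, the ratio on the right tends to $1$, so $\nu\bigl(\bigcup_{n\ge M} E_n\bigr) \ge 1/C$ for every $M$. Finally, since $\bigcup_{n \ge M} E_n$ is a decreasing sequence of sets whose intersection is $\limsup_{n\to\infty} E_n$, continuity of measure from above (using that $\nu$ is a probability measure) yields $\nu(\limsup_n E_n) \ge 1/C$, as claimed.

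There is essentially no obstacle here: this is the classical divergence Borel--Cantelli lemma, and the only care needed is to start the sums at a moving lower index $M$ so that the partial unions shrink to $\limsup E_n$ rather than stabilising at $\bigcup_{n \ge 1} E_n$. The hypothesis that the quasi-independence estimate holds for infinitely many $N$ (rather than all $N$) causes no difficulty, since we only need to extract one subsequential limit.
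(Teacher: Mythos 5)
Your proof is correct. The paper does not prove this lemma at all --- it simply quotes it as \cite[Proposition 2]{BDV2006} --- and your argument (Cauchy--Schwarz applied to the tail sum $T_{M,N}$, letting $N\to\infty$ along the admissible subsequence, then shrinking the tail unions to $\limsup_n E_n$ by continuity from above) is precisely the standard second-moment proof of that cited result, with all the steps, including the handling of the ``infinitely many $N$'' hypothesis, carried out correctly.
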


We also require the following special case of \cite[Proposition 1]{BDV2006}. This is a well-known consequence of the Lebesgue density theorem.

\begin{lemma}
\label{lm:lebesgue-density}
Let $\cI$ be a fixed compact subinterval in $\bR$, and let $E$ be a Borel subset of $\cI$. Assume that there exists $C >0$ such that for any subinterval $J \subset \cI$ we have
\[ |E \cap J| \geq \frac{1}{C} |J|. \]
Then $|E| = |\cI|$.
\end{lemma}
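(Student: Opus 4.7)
The plan is to argue by contradiction via the Lebesgue density theorem. Suppose, for a contradiction, that $|E| < |\cI|$. I would set $F := \cI \setminus E$, which is then a Borel subset of $\cI$ of positive Lebesgue measure. Since the two endpoints of $\cI$ together have measure zero while $|F| > 0$, almost every point of $F$ lies in the interior of $\cI$.

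Next I would apply the Lebesgue density theorem: almost every point of $F$ is a point of density one for $F$. Combining these two almost-everywhere statements produces a point $x_0$ lying in the interior of $\cI$ which is simultaneously a density point of $F$. Consequently, for all sufficiently small $r > 0$ the symmetric interval $J := [x_0 - r, x_0 + r]$ is contained in $\cI$ and satisfies
\[
\frac{|F \cap J|}{|J|} > 1 - \frac{1}{2C}.
\]
Hence $|E \cap J| = |J| - |F \cap J| < |J|/(2C)$, which contradicts the hypothesis $|E \cap J| \geq |J|/C$ applied to this particular $J$.

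This argument has no substantive obstacle. The only mild subtlety is ensuring that the density point can be chosen in the interior of $\cI$, so that the symmetric neighborhood about it actually lies inside $\cI$ and therefore qualifies as a valid ``subinterval'' in the statement of the lemma; but this follows immediately from the fact that $F$ has positive measure while the endpoints of $\cI$ do not. In summary, the lemma is a direct consequence of the Lebesgue density theorem, and the proof essentially writes itself once one sets up the contradiction.
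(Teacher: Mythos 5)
Your proof is correct. Note that the paper does not actually prove this lemma: it is quoted as a special case of \cite[Proposition 1]{BDV2006}, which is a more general statement of the same flavour for balls in a metric space equipped with a suitable (doubling) measure. Your argument---pass to the complement $F = \cI \setminus E$, pick an interior Lebesgue density point of $F$, and contradict the hypothesis on a small symmetric interval around it---is the standard one-dimensional specialization of that result, and all the details check out: $E \cap J$ and $F \cap J$ do partition $J$ once $J \subseteq \cI$, the density point can indeed be taken interior since the two endpoints are null, and the threshold $1 - \frac{1}{2C}$ gives the required strict contradiction with $|E \cap J| \ge |J|/C$. The only thing your write-up buys over the paper's citation is self-containedness; conversely, the cited proposition would also cover the higher-dimensional or metric-space settings alluded to elsewhere in the literature, which your interval-specific argument does not. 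No gap.
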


For the remainder of this section, we fix a subinterval $J \subset \cI$. We'll apply Lemma \ref{lm:borel-cantelli-divergent} with $A = J$ and $\nu(E) = |J|^{-1} |E|$ for $E \subseteq J$. For each $(t,s) \in \mathcal{R}$, we will carefully choose a subset $B^{\ast}(t,s)$ of $B_{\epsilon}(t,s)\cap J$. By Lemmas \ref{lm:borel-cantelli-divergent} and \ref{lm:lebesgue-density}, in order to prove Proposition \ref{prop:limsup} it suffices to show that
\begin{equation}
\label{equ:borel-cantelli-divergent}
\sum_{(t,s) \in \mathcal{R}} |B^{\ast}(t,s)| = \infty,
\end{equation}
and that if $N$ is sufficiently large in terms of $J$ then
\begin{equation}
\label{equ:borel-cantelli-independent}
\displaystyle \sum_{(t,s), (t',s') \in \mathcal{R}\cap V(N)} |B^{\ast}(t,s) \cap B^{\ast}(t',s')|
\ll  |J|^{-1} \left( \sum_{(t,s) \in \mathcal{R}\cap V(N)}  |B^{\ast}(t,s)| \right)^2,
\end{equation}
where $V(N) = \{ (t,s) \in \bN^2: t+s \leq N\}$, and wherein the implied constant does not depend on $J$. 
Here we work with $B^{\ast}(t,s)$ instead of $B_{\epsilon}(t,s)\cap J$ to simplify the proof of \eqref{equ:borel-cantelli-independent}; this idea was also used in \cite[\S 10]{Beresnevich-Haynes-Velani2015}.

\par We will make frequent use of the calculation
\begin{equation}
\label{equ:conjugation}
a(t,s)u(\varphi(x_0 + r e^{-2s-t})) = u(O_f(e^{t-s})) u(r \vv e_2) a(t,s) u(\varphi(x_0)),
\end{equation}
valid for $r \in [-3,3]$, which is straightforward to verify by hand. We define $B^{\ast}(t,s)$ as follows.

\begin{defn}
\label{def:subset-B-ast-t-s}
For $(t,s) \in \mathcal{R}$, let us consider the pair $(t^*, s^*)$ given by
\[
t^* = t + \kap  \text{ and } s^* = s - 2 \kap,
\]
where $\kap = \lfloor\frac{2}{3} \log (s+t) - \log \epsilon \rfloor +1$. Let $T_1 = T_1(J)$ be sufficiently large. 
Let $c >0$ be a constant which will be determined later. 
For $(s,t) \in \mathcal{R}$ such that $s+t < T_1$, let us define $B^{\ast}(t,s) = \emptyset$. When $s+t \ge T_1$, we divide $J$ into small subintervals of length $2 \times 10^6 c^{-1}  e^{-2s^* - t^*}$. 
For each such subinterval $I = [x_l , x_r]$.  
For any $x \in I$, if
\[ \min \{ |x - x_l|, |x - x_r| \} \ge 4 e^{-2 s^\ast - t^\ast},\]
and there exists $\vv a \in \Z^3 \setminus \{\vv 0\}$ such that
\[ a(t^*,s^*)u(\varphi(x_0))\vv a =: \vv v = (v_1, v_2, v_3) \]
satisfies $\|\vv v\| \le 1$ and $|v_3| \ge 1/2$, we will call $x$ a good point. Otherwise we call it a bad point. 
If $I = [x_l, x_r]$ contains a good point, we call it a good interval. Otherwise we call it a bad interval.
For every good interval $I$, let us take a good point $x \in I$. Let us pick $\vv a \in \Z^3 \setminus \{\vv 0\}$ such that 

\[ \vv v = (v_1, v_2, v_3) =a(t^*, s^*) u(\varphi(x))\vv a \]
satisfies $\|\vv v\| \le 1$ and $|v_3| \ge 1/2$. For $x' = x + r e^{-2 s^\ast - t^\ast}$, \eqref{equ:conjugation} yields
\begin{equation} 
  \label{BoundedShift}
 a(t^*, s^*) u(\varphi(x'))\vv a =  u(O(e^{t^{*}-s^{*}})) u(r \vv e_2) \vv v = (v_1 , v_2 + r v_3, v_3)^{T} + O(e^{t^{*} - s^{*}}).
\end{equation}

Then there exists a unique $r^{*} \in  [-2, 2]$ such that $v_2 + r^{*} v_3 = 0$. Let us denote
\[
x_1 := x + r^{*} e^{-2s^* - t^*}, \qquad I_1 := [x_1 - \theta e^{-2s^* - t^*}, x_1+ \theta e^{-2s^* - t^*}] ,
\]
where $\theta = e^{-3\kappa}$. From \eqref{BoundedShift}, we see that if $x' \in I_1$ then
\[ a(t^*, s^*)u(\varphi(x'))\vv a =: \vv v(x') = (v_1(x'), v_2(x'), v_3(x'))^{T} \]
satisfies that $|v_2(x')| \leq \theta$. Therefore
\[
a(t,s) u(\varphi(x')) \vv a = a(- \kap, 2 \kap) \vv v(x') = (e^{- \kap} v_1(x'), e^{2 \kap} v_2(x'), e^{- \kap} v_3(x') )
\]
for all $x' \in I_1$. The supremum norm of this vector is less than or equal to $e^{- \kap} 
\le \epsilon (s+t)^{-2/3}$, so
\[ I_1 \subset B_{\epsilon}(t,s).\]
For every good interval $I$, we define $I_1 \subset I$ as above. We define $B^{\ast}(t,s)$ to be the union of the intervals $I_1$ constructed as above.
\end{defn}

\begin{proof}[Proof of Proposition \ref{prop:limsup}]
Let $\mathcal{M} \subset X$ denote the set of lattices 
containing a vector $\vv v  = (v_1 , v_2, v_3)$ with $\|\vv v\| \le 1$ and $|v_3| \ge 1/2$. It is easy to 
see that we there exists $F \in C_c^\infty (X)$ with $0 \le F \le 1$, $\|F\|_{C^m_b} \ll 1$, $\int F \dd \mu_G >0$, and $\supp F \subset \cM$. 
Now let the constant $c$ in Definition \ref{def:subset-B-ast-t-s} be $\int F \dd \mu_G/2$. 
Given $t$ and $s$, let us denote $s^\ast$ and $t^\ast$ as in Definition \ref{def:subset-B-ast-t-s}.
Applying Theorem \ref{thm:effective-ratner-main-result} with $F$ and $J$, we have 
\[ \left| \int_J F([a(t^\ast,s^\ast)u(\varphi(x))]) \dd x - |J| \int F \dd \mu_G \right| \ll e^{-\eta t}.  \]
For $t + s$ large enough, we have  
\[ \int_J F([a(t^\ast,s^\ast)u(\varphi(x))]) \dd x \ge |J| \int F \dd \mu_G /2 =  c |J|. \]
Note that 
\[ \int_J F([a(t^\ast,s^\ast)u(\varphi(x))]) \dd x \le \left| \{ x \in J: [a(t^\ast,s^\ast)u(\varphi(x))] \in \cM \}  \right|,  \] 
we get 
\[\left| \{ x \in J: [a(t^\ast,s^\ast)u(\varphi(x))] \in \cM \}  \right| \ge c |J|.\] 
Let us denote 
\[J_{good} : = \bigcup_{I \text{ good}} I,\]
and
\[  J_{bad} := \bigcup_{I \text{ bad}} I.  \]
Note that 
\begin{align*} 
  & | J_{bad} \cap \{ x \in J: [a(t,s)u(\varphi(x))] \in \cM \} | \\ 
  =& \sum_{I \text{ bad}} |I \cap  \{ x \in J: [a(t,s)u(\varphi(x))] \in \cM \}| \\ 
  \le& \sum_{I \text{ bad}} 8  e^{-2 s^\ast - t^\ast} \\
  \le& |J| (10^6 c^{-1} e^{-2 s^\ast - t^\ast })^{-1} 8  e^{-2 s^\ast - t^\ast} \le 10^{-5} c |J|.
\end{align*}
Therefore, we have 
\[  | J_{good} \cap \{ x \in J: [a(t,s)u(\varphi(x))] \in \cM \} | \ge 0.9 c |J|. \]
Note that 
\begin{align*}
  & | J_{good} \cap \{ x \in J: [a(t,s)u(\varphi(x))] \in \cM \} | \\
  \le& |\{I: I \text{ is good}\}| 10^6 c^{-1} e^{-2 s^\ast - t^\ast }.
\end{align*}
It implies that 
\[ |\{I: I \text{ is good}\}| \ge 0.9 c |J| \times 10^{-6} c e^{2 s^\ast + t^\ast} \ge 10^{-7} c^2 |J| e^{2s^\ast + t^\ast}. \]
Then 
\begin{align*}
  |B^\ast(t,s)| &= |\{I: I \text{ is good}\}| \times 2 e^{-2s -t} \\
                &=  10^{-7} c^2 |J| e^{2s^\ast + t^\ast}  \times 2 e^{-2s -t} \\
                &\ge 10^{-7} c^2 |J| e^{-3\kappa}.
\end{align*}
Noting that $e^{-3\kappa} \gg \epsilon^3 (s+t)^{-2}$, we get
\begin{equation}
\label{equ:B-ast-t-s-size}
 |B^{\ast}(t,s)| \gg |J| \epsilon^3 (s+t)^{-2}
\end{equation}
where the implied constant is independent of $J$. Therefore
\begin{align*}
\sum_{(t,s) \in \mathcal{R}} |B^{\ast} (t,s)| &\gg  \sum_{t+s \ge T_1} |J| \epsilon^3 (s+t)^{-2}
 =  \sum_{\ell=T_{1}}^{\infty} \sum_{t+s = \ell} |J| \epsilon^3 \ell^{-2} \gg_{\eps,J} \sum_{\ell =T_{1}}^{\infty} \ell^{-1} = \infty.
\end{align*}
This confirms \eqref{equ:borel-cantelli-divergent}.

We turn our attention towards \eqref{equ:borel-cantelli-independent}. For $(t,s), (t',s') \in \mathcal{R}$, let us estimate $|B^{\ast} (t,s) \cap B^{\ast}(t',s') |$. Recall that for $(t,s ) \in \mathcal{R}$ we have $c_1 s \leq  t  \leq c_2 s$. Put $c_3 = (c_2 - c_1)/4$. Without loss of generality, we may assume that $2s' + t'  > 2 s + t$. We will consider the following two cases separately:
\begin{enumerate} [(i)]
	\item $(2s' + t') - (2s + t) \geq c_3 (2s + t) $.
	\item $ (2 s' + t') - (2s + t) < c_3 (2s + t) $.
\end{enumerate}
\par Let us take care of the first case. Consider a small interval $I_1 \subset B^{\ast}(t,s)$. We compute that
\[
|I_1| = 2e^{-2s-t}.
\]
Let us count how many small intervals from $B^{\ast}(t',s')$ are contained in $I_1$. By Definition \ref{def:subset-B-ast-t-s}, every small interval $I'_1$ from $B^{\ast}(t', s')$ has length $2e^{-2s' - t'}$, and is contained in an interval $I'$ of length $2 \epsilon^{-3} (s'+t')^{2} e^{-2s' -t'}$ which does not intersect any other small intervals from $B^{\ast}(t',s')$. Therefore $I_1$ intersects at most
\[
2+ \lfloor e^{-2s -t} (\epsilon^{-3} (s'+t')^{2} e^{-2s' -t'})^{-1} \rfloor
\]
small intervals from $B^{\ast}(t', s')$, and note that the second term dominates because we are in Case (i). This implies that
\begin{align*}
 |I_1 \cap B^{\ast}(t',s')| &  \le  (2 +  e^{-2s -t} (\epsilon^{-3} (s'+t')^{2} e^{-2s' -t'})^{-1} ) 2 e^{-2s' -t'} \\
& \le 2 |I_1| \epsilon^3 (s'+ t')^{-2} \ll  |J|^{-1}  |I_1| \cdot |B^{\ast}(t',s')|,
\end{align*}
where the last inequality comes from \eqref{equ:B-ast-t-s-size}. Now
\begin{align*}
|B^{\ast}(t,s)\cap B^{\ast}(t',s') | & = \sum_{I_1 \subset B^{\ast}(t,s)} |I_1 \cap B^{\ast}(t',s') |
\\ &\ll  |J|^{-1} \sum_{I_1 \subset B^{\ast}(t,s)} |I_1| \cdot |B^{\ast}(t',s')| \\
 & \le  |J|^{-1} |B^{\ast}(t,s) | \cdot | B^{\ast}(t',s') |.
\end{align*}
We conclude that if $(2s' + t') - (2s + t) \geq c_3 (2s + t)$ then
\begin{equation}
\label{equ:independence-easy-case}
|B^{\ast}(t,s)\cap B^{\ast}(t',s') |  \ll   |J|^{-1} |B^{\ast}(t,s) | \cdot | B^{\ast}(t',s') |
\end{equation}
where the implied constant is independent of $J$.

\bigskip
We now examine Case (ii). Since $c_1 s' \leq  t' \leq c_2 s'$ and $c_3 =(c_2 - c_1)/4 $, we deduce that
\[ |s' -s| \leq (c_2 - c_1) s \text{ and } |t' - t| \leq (c_2 - c_1) s. \]
Define $g = a(t' - t, s' -s)$, noting that $a(t' , s') = g a(t,s)$ and $\|g\| \leq e^{2(c_2- c_1) s}$.

For $\theta \in (0,1/2)$, let $D(\theta ) = X \setminus K_{\theta}$. By \cite[Proposition 7.1]{Kleinbock-Margulis-Log-Laws},
\[
\mu_G(D(\theta)) \asymp \theta^3.
\]
Let $m' = m'(G) \in \bN$ be a constant to be determined in due course. By the correct version of \cite[Lemma 4.2]{Kleinbock-Margulis-Log-Laws}, namely \cite[Theorem 1.1]{Kleinbock-Margulis-Log-Laws-Erratum}, applied with distance-like (DL) function $\Del$ from \eqref{DeltaDefinition}, for each $\ell \in \bN$ there exists a function $F_\ell \in C^{\infty}_b (X)$ such that---with the same implicit constants for all $\ell$---
\begin{enumerate}
	\item $0 \leq F_\ell(x) \leq 1$, and $F_\ell (x) = 1$ for $x \in D(\epsilon \ell^{-2/3})$;
	\item $\int_X F_\ell \:\dd \mu_G \ll \mu_G (D(\epsilon l^{-2/3})) \ll \epsilon^3 \ell^{-2}  $;
	\item $  \| F_\ell \|_{2,m'} \ll \mu_G (D(\epsilon l^{-2/3})) \ll \epsilon^{3} \ell^{-2}  $.
\end{enumerate}
(The \emph{Sobolev norm} $\| \cdot \|_{2,m'}$ is defined by $ \| h \|_{2,m'} = \displaystyle \sum_{ \ord(D) \le m'} \| D h \|_2$. For further details, see \cite{Kleinbock-Margulis-Log-Laws-Erratum}.) Moreover, by examining the construction, we also have that $\|F_\ell\|_{C^m_b} \ll 1$, where $m$ is as in Theorem \ref{thm:effective-ratner-main-result} (with the same implicit constant for all $\ell$).

We have
\[ |B^{\ast}(t, s) \cap B^{\ast}(t',s') | \leq \int_J F_\ell(a(t,s)u(\varphi(x))) F_{\ell'} ( a(t',s')u(\varphi(x)) ) \dd x, \]
where $\ell = t+s$ and $\ell' = t' + s'$. Thus, to get the desired upper bound on
\[ |B^{\ast}(t, s) \cap B^{\ast}(t',s') |, \]
it suffices to bound
\[
\int_J F_\ell(a(t,s)u(\varphi(x))) F_{\ell'} ( a(t',s')u(\varphi(x)) ) \dd x
\]
from above. For $x \in X$, we write $F(x) = F_\ell(x)F_{\ell'}(g x)$, where $g = a(t' - t, s' - s)$ as before. The above integral is equal to
\[  \int_J F(a(t,s)u(\varphi(x))) \dd x. \]

Theorem \ref{thm:effective-ratner-main-result} gives
\begin{align*}   \left| \frac{1}{|J|}\int_J F(a(t,s) u(\varphi(x))) \dd x - \int_X F(x) \dd \mu_G(x) \right| & \ll \frac{1}{|J|} e^{-\eta t} \|F\|_{C^m_b} \\& \leq \frac{1}{|J|} e^{-\eta c_1 s} \|F\|_{C^m_b}.
\end{align*}
By the product rule, we have
\[
\|F\|_{C^m_b} \ll \|F_\ell \|_{C^m_b} \|F_{\ell'} \circ g \|_{C^m_b}.
\]
Our choice of $\{F_\ell\}$ ensures that
\[
\|F_\ell(\cdot)\|_{C^m_b} \ll 1
\]
and
\[
\|F_{\ell'} \circ g\|_{C^m_b} \ll \|g\|^m \|F_{\ell'}\|_{C^m_b} \ll e^{2m(c_2 - c_1 ) s}.
\]
Therefore
\[
\|F\|_{C^m_b} \ll e^{2m (c_2 - c_1) s}.
\]
We may choose $c_1$ sufficiently close to $c_2$ in order to ensure that
\[
2m (c_2 - c_1)  < \eta c_1/2,
\]
and so
\begin{equation}
\label{equ:critical-line-integral}
\left| \int_J F(a(t,s) u(\varphi(x))) \dd x - |J|\int_X F(x) \dd \mu_G(x) \right| \ll  e^{-\eta c_1 s/2}.
\end{equation}

Next, we consider
\[ |J|\int_X F(x) \dd \mu_G(x) = |J|\int_X F_\ell(x) F_{\ell'} (a(t'-t, s' -s) x) \dd \mu_G(x). \]
We need to estimate
\[
\left| |J|\int_X F_\ell(x) F_{\ell'} (a(t'-t, s' -s) x) \dd \mu_G(x) - |J| \int_X F_\ell \dd \mu_G \int_X F_{\ell'} \dd \mu_G \right|.
\]
By the exponential mixing property of the action of $a(t,s)$, see \cite[Corollary 3.5]{Kleinbock-Margulis-Log-Laws} and \cite[Equation (EM)]{Kleinbock-Margulis-Log-Laws-Erratum}, there exist constants $\eta_8 >0$ and $m' \in \bN$ such that
\[
\begin{array}{cl}
     & \left| \int_X F_\ell(x) F_{\ell'} (a(t'-t, s' -s) x) \dd \mu_G(x) - \int_X F_\ell \dd \mu_G \int_X F_{\ell'} \dd \mu \right| \\
 \ll &  e^{-\eta_8 \max( |t-t'|, |s - s'| )}  \| F_\ell \|_{2,m'} \| F_{\ell'} \|_{2,m'} \\
 \ll & e^{-\eta_8 \max( |t-t'|, |s - s'| )} \sqrt{\epsilon^3 \ell^{-2} } \sqrt{ \epsilon^3 (\ell')^{-2} }.
\end{array}
\]
(Note that we have now specified $m'$.) The last inequality follows from the third property of $\{ F_\ell \}$. Since $\ell \asymp \ell'$ and $\epsilon^3 \ell^{-2} \asymp |J|^{-1} |B^{\ast}(t,s)| $, we have
\begin{equation}
\label{equ:critical-exponential-mixing}
\begin{array}{cl}
 & \left| |J|\int_X F_\ell(x) F_{\ell'} (a(t'-t, s' -s) x) \dd \mu_G(x) - |J|\int_X F_\ell \dd \mu_G \int_X F_{\ell'} \dd \mu_G \right| \vspace{1mm}\\
  \ll & |J| e^{-\eta_8 \max ( |t-t'|, |s-s'|)}|J|^{-1} |B^{\ast}(t,s)| = e^{-\eta_8 \max ( |t-t'|, |s-s'|)} |B^{\ast}(t,s)| .
\end{array}
\end{equation}

Combining \eqref{equ:critical-line-integral} and \eqref{equ:critical-exponential-mixing} gives
\[
\begin{array}{cl}
   & \int_J F_\ell(a(t,s)u(\varphi(x))) F_{\ell'} ( a(t',s')u(\varphi(x)) ) \dd x - |J|\int_X F_\ell \dd \mu_G \int_X F_{\ell'} \dd \mu_G \\
   \ll & e^{-\eta c_1 s/2} + e^{-\eta_8 \max ( |t-t'|, |s-s'|)}|B^{\ast}(t,s)|,
\end{array}	
\]
which implies that
\begin{align*}
 |B^{\ast}(t,s)\cap B^{\ast}(t',s') | &\ll |J|\int_X F_\ell \: \dd \mu_G \int_X F_{\ell'} \: \dd \mu_G
\\&\qquad+ e^{-\eta c_1 s/2} + e^{-\eta_8 \max ( |t-t'|, |s-s'|)} |B^{\ast}(t,s)|.
\end{align*}
Since $\int_X F_\ell \: \dd \mu_G  \ll |J|^{-1} |B^{\ast}(t,s)|$ and $|J| \ll 1$, we now have
\begin{align}
\notag
|B^{\ast}(t,s)\cap B^{\ast}(t',s') | &\ll |J|^{-1} |B^{\ast}(t,s) | \cdot |B^{\ast}(t',s')|
+ |J|^{-1}e^{-\eta_9 s}  \\ \label{equ:critical-independence} & \qquad + e^{-\eta_8 \max ( |t-t'|, |s-s'|)}  |J|^{-1} |B^{\ast}(t,s)|,
\end{align}
where $\eta_9 = \eta c_1/2$.\\

For $(t,s) \in \mathcal{R}$, let us denote
\begin{align*}
\mathcal{R}(t,s) &:=\{ (t',s'): 2s' +t' \geq 2 s + t \}, \\
\mathcal{R}_1(t,s)& := \{ (t',s') \in \mathcal{R}(t,s): (2s' + t') - (2s + t) \geq c_3 (2s+t)  \},
\end{align*}
and $\mathcal{R}_2(t,s) := \mathcal{R}(t,s)\setminus \mathcal{R}_1(t,s)$. For $(t',s') \in \mathcal{R}_1(t,s)$ we have \eqref{equ:independence-easy-case}, and for $(t',s') \in \mathcal{R}_2(t,s)$ we have \eqref{equ:critical-independence}. Note that all implicit constants in the estimates above are independent of $J$, $(t,s)$ and $(t',s')$. Recall that $V(N) = \{ (t,s) \in \bN^2: t+s \leq N\}$. Therefore
\begin{equation*}
\begin{array}{cl}
   & \displaystyle \sum_{(t,s) \in \mathcal{R}\cap V(N)} \sum_{(t',s') \in \mathcal{R}\cap V(N)} |B^{\ast} (t,s)\cap B^{\ast}(t',s')|  \\
    \ll &  |J|^{-1} \displaystyle \Bigl( \sum_{(t,s) \in \mathcal{R}\cap V(N)} |B^{\ast}(t,s)| \Bigr)^2  + |J|^{-1}\sum_{(t,s) \in \mathcal{R} \cap V(N)}  s^2 e^{-\eta_9 s} \\
         & + \:   |J|^{-1} \displaystyle  \sum_{(t,s) \in \mathcal{R}\cap V(N)} | B^{\ast}(t,s) | \sum_{(t',s') \in \mathcal{R}_2(t,s) \cap V(N)} e^{-\eta_8 \max ( |t-t'|, |s-s'|)} \\
      \ll &  |J|^{-1} \displaystyle  \Bigl( \sum_{(t,s) \in \mathcal{R} \cap V(N)} |B^{\ast}(t,s)| \Bigr)^2  +  |J|^{-1}\sum_{s \geq T_1} s^3 e^{-\eta_9 s} \\
& + \:  |J|^{-1}  \displaystyle \sum_{(t,s) \in \mathcal{R}\cap V(N)} | B^{\ast}(t,s) | \sum_{k=1}^{\infty} k^2 e^{-\eta_8 k}.
\end{array}
\end{equation*}

Note from \eqref{equ:B-ast-t-s-size} that if $N \ge \max \{T_1, e^{|J|^{-1}}\}$ then
\[
\sum_{(t,s) \in \mathcal{R}\cap V(N)} |B^{\ast}(t,s)|  \gg |J| \log N \gg 1.
\]
Now
\[
 \sum_{s \geq T_1} s^3 e^{-\eta_9 s} \ll 1 \ll \Bigl( \sum_{(t,s) \in \mathcal{R}\cap V(N)} |B^{\ast}(t,s)| \Bigr)^2
\]
and
\[ \sum_{k=1}^{\infty} k^2 e^{-\eta_8 k} \ll 1 \ll \sum_{(t,s) \in \mathcal{R}\cap V(N)} |B^{\ast}(t,s)| . \]
We obtain \eqref{equ:borel-cantelli-independent}, which completes the proof of Proposition \ref{prop:limsup}, and hence of Theorem \ref{thm:mda-result}.
\end{proof}

\section{The convergence theory}
\label{ConvergenceTheory}

In this section, we establish Theorems \ref{ConvergenceResult} and \ref{ConvergenceVariant}. We focus our attention on Theorem \ref{ConvergenceResult}, and explain at the end how the proof can be modified to give Theorem \ref{ConvergenceVariant}. We follow \cite[\S 4]{BV2007}, with $\cC$ being a fixed segment of $L_{a,b}$ instead of an arc. Recall that $L_{a, b} := \{ (\alpha, \beta) \in \bR^2: \alpha = a \beta + b \}$. With $I \subset \bR$ a fixed, bounded interval, let us explicitly write $\cC = \{ (a \bet + b, \bet): \bet \in I \}$. Let $\ome^* = \ome^*(a,b)$ be the dual exponent of $(a,b)$. Note that
\[
2 \le \ome^* < 5,
\]
where the first inequality is Dirichlet's approximation theorem (see \cite{KW}) and the second is hypothesised.

The key ingredient is certain structural data concerning the \emph{dual Bohr set}
\[
B = B(\del; Q; a,b) := \{ (p_2, q, p_1) \in \bZ^3: |p_2|, |q| \le Q, | p_2 a + qb - p_1| \le \del \},
\]
where $Q \in \N$, and $a, b, \delta \in \R$ with $\delta > 0$. Specifically, we will show that $B$ is tightly contained within a \emph{generalised arithmetic progression}
\[
P = P(\bv_1, \bv_2, \bv_3; N_1, N_2, N_3) := \{  n_1 \bv_1 + n_2 \bv_2 + n_3 \bv_3 : n_i \in \bZ, |n_i| \le N_i \},
\]
for some $\bv_1, \bv_2, \bv_3 \in \bZ^3$ and some $N_1, N_2, N_3 \in \bN$.

\begin{lemma} [Outer structure of dual Bohr sets] \label{outer}
Assume that $Q \ge Q_0(a,b)$, where $Q_0(a,b)$ is a suitably large constant, and
\[
\frac Q{1000} > \del \gg Q^{-1/2} (\log Q)^{-3/2}.
\]
Then there exist $N_1, N_2, N_3 \in \bN$, and linearly independent $\bv_1, \bv_2, \bv_3 \in \bZ^3$, such that
\begin{equation}
\label{StructuralBohr}
B \subseteq P, \qquad \#P \ll_{a,b} \del Q^2.
\end{equation}
In particular, we have
\begin{equation}
\label{CountingBohr}
\#B \ll_{a,b} \del Q^2.
\end{equation}
\end{lemma}

\begin{proof} Observe that $B$ is the set of lattice points in the region
\[
\cB := \{ (p_2, q, p_1) \in [-Q,Q]^2 \times \bR: |p_2 a + qb - p_1| \le \del \}.
\]
Put
\[
\lam = (\del Q^2)^{1/3}, \qquad \cS = \lam^{-1} \cB.
\]
Let $\lam_1 \le \lam_2 \le \lam_3$ be the reduced successive minima \cite[Lecture X]{Sie1989} of the symmetric convex body $\cS$. Corresponding to these are vectors $\bv_1, \bv_2, \bv_3 \in \bZ^3$ whose $\bZ$-span is $\bZ^3$, and for which $\bv_i \in \lam_i \cS$ ($i=1,2,3$). By the first finiteness theorem \cite[Lecture X, \S 6]{Sie1989}, we have
\begin{equation} \label{FFT}
\lam_1 \lam_2 \lam_3 \asymp \vol(\cS)^{-1} \asymp 1,
\end{equation}
and in fact
\[
\lam_1 \lam_2 \lam_3 \le \frac{27}{\vol(\cS)} = \frac{27}{8},
\]
so $\lam_1\le 3/2$.

Next, we bound $\lam_1$ from below. We know that
\[
\bv_1 \in \lam_1 \cS = \frac{\lam_1}{\lam} \cB
\]
has integer coordinates, so with $\bv_1 = (x,y,z) \ne \boldsymbol 0$ we have
\[
|x|, |y| \le \frac{\lam_1}{\lam} Q, \qquad | xa + yb - z | \le \frac{\lam_1}{ \lam} \del \le \frac32 (\del/Q)^{2/3} < \frac12.
\]
Hence $(x,y) \ne (0,0)$ and, from the definition of the dual exponent, we have
\[
\frac{\lam_1}{ \lam}  \del  \ge |xa + yb - z| = \langle xa + yb \rangle\gg_{a,b,\eps} \Bigl(\frac{\lam_1}{\lam} Q \Bigr)^{-\ome^*-\eps}
\]
for any $\eps > 0$, which rearranges to
\[
\lam_1 \gg \lam (\del Q^{\ome^*+\eps})^{-(1+\ome^*+\eps)^{-1}}.
\]

This enables us to bound $\lam_3$ from above: from \eqref{FFT}, we have $\lam_3 \ll \lam_1^{-2}$. In particular, we now know that
\begin{align*}
\frac{\lam}{\lam_3} &\gg \lam^3  (\del Q^{\ome^*+\eps})^{-2/(1+\ome^*+\eps)} =
\del Q^2 (\del Q^{\ome^*+\eps})^{-2/(1+\ome^*+\eps)} \\
&\gg (Q^{-1/2} (\log Q)^{-3/2})^{1 - 2(1+\ome^*+\eps)^{-1}} Q^{2 /(1+\ome^*+\eps)} \gg Q^{\frac3{1+\ome^*+\eps} - \: \frac12 - \eps}.
\end{align*}
Therefore $\lam \ge \lam_3$, since $\ome^* < 5$, and since $\eps > 0$ is arbitrary. We now specify our length parameters
\[
N_i =  \Bigl \lfloor \frac{C \lam} {\lam_i} \Bigr \rfloor \ge C \qquad (i = 1,2,3),
\]
where $C \ge C_0(a,b) \in \bN$ is a large constant. Observe that
\[
\#P \ll N_1 N_2 N_3 \ll \frac{\lam^3}{\lam_1 \lam_2 \lam_3} \ll \del Q^2.
\]

Our final task is to show that $B \subseteq P$. Let $\bx \in B$. Since $\bv_1, \bv_2, \bv_3$ generate $\bZ^3$, there exist $n_1, n_2, n_3 \in \bZ$ such that
\[
\bx = n_1 \bv_1 + n_2 \bv_2 + n_3 \bv_3.
\]
Let $M = (\bv_1, \bv_2, \bv_3) \in \mathrm{GL}(3, \bZ)$, and for $i=1,2,3$ let $M_i$ be the matrix obtained by replacing the $i$th column of $M$ by $\bx$. Cramer's rule gives
\[
|n_i| = |\det(M_i)|.
\]

Let $\{ i, j, r \} = \{ 1, 2, 3 \}$.
As $\cS$ is convex and contains 
\[
\bzero, \quad
\lam^{-1} \bx, \quad
\lam_j^{-1} \bv_j, \quad
\lam_r^{-1} \bv_r,
\]
it must contain the $[0,1]$-span of the vectors
\[
(3 \lam)^{-1} \bx, \quad
(3 \lam_j)^{-1} \bv_j, \quad
(3 \lam_r)^{-1} \bv_r,
\]
which is a parallelopiped $\cP$. Now
\begin{align*}
1 &\gg \vol(\cS) \ge \vol(\cP) = |\det((3 \lam)^{-1} \bx, 
(3 \lam_j)^{-1} \bv_j, 
(3 \lam_r)^{-1} \bv_r)| \\
&\gg \frac{|\det(M_i)|}
{\lam \lam_j \lam_r} = 
|n_i| \frac{\lam_i}{\lam \lam_i \lam_j \lam_r}
= |n_i| \frac{\lam_i}{\lam \lam_1 \lam_2 \lam_3} \gg |n_i| \frac{\lam_i}{\lam},
\end{align*}
so
\[
n_i \ll \lam/\lam_i.
\]
As $C$ is large, we have $|n_i| \le N_i$ for all $i$, so $\bx \in P$.
\end{proof}

\begin{remark} After showing that $\lam \ge \lam_3$ in the proof above, we could have cited a general counting result such as \cite[Proposition 2.1]{BHW} to obtain \eqref{CountingBohr}. We thank an anonymous referee for kindly pointing this out. We have left the full statement and proof intact, as we believe the structural assertion \eqref{StructuralBohr} to have independent interest. We envision applications to (1) the dual theory of multiplicative approximation on fibres, and (2) the discrepancy theory of Kronecker sequences.
\end{remark}

As in \cite[\S 4]{BV2007}, we may assume that 
\begin{equation} \label{BVassumption}
q^{-1} (\log q)^{-3} < \psi(q) < q^{-1} (\log q)^{-1}
\end{equation}
for all sufficiently large $q$. For $t \in \bN$ and $m \in \bZ$, denote by $N(t,m)$ the number of integer triples $(q,p_1,p_2)$ with $2^t \le q < 2^{t+1}$ for which there exists $\bet \in I$ such that
\[
\Bigl| a \bet + b - \frac{p_1}q \Bigr| < \frac{2^m \sqrt{2 \psi(2^t)}}{2^t}
\]
and
\[
\Bigl| \bet - \frac{p_2}q \Bigr| < \frac{2^{-m} \sqrt{2 \psi(2^t)}}{2^t}.
\]
By the triangle inequality, we have
\begin{equation} \label{delta}
| p_2 a + qb - p_1| < 3(1+|a|) 2^{|m|} \sqrt{\psi(2^t)}=: \del.
\end{equation}
Let $C_I$ be a large positive constant depending only on $I$. Applying Lemma \ref{outer} with $Q = (1+ |a|) 2^t C_I$, we obtain the following.

\begin{cor} [Counting rational points near general lines] \label{counting} If $t$ is sufficiently large and
\begin{equation} \label{tdelta}
2^{|m|} t \sqrt{ \psi(2^t) } \le 1
\end{equation}
then
\[
N(t,m) \ll_{a,b,I} 2^{|m|} 2^{2t} \sqrt{\psi(2^t)}.
\]
\end{cor}

\noindent This estimate matches \cite[Equation (35)]{BV2007}, and the rest of the proof in 
\cite[\S 4]{BV2007} applies almost verbatim in the present context of Theorem \ref{ConvergenceResult}.\\

If we slightly alter our circumstances, then there is a Fourier-analytic way to bound the cardinality of the dual Bohr set. Suppose $\ome^\times(a,b) < 4$. Consider the case $(d,n) = (1,2)$ of \cite[Theorem 8]{HuangLiu}, which we state below.

\begin{lemma} [Huang and Liu]
Let $\alp_1, \alp_2, \ome \in \bR$ with
\[
\langle q \alp_1 \rangle
\langle q \alp_2 \rangle \gg
|q|^{-\ome}
\qquad (0 \ne q \in \bZ).
\]
Let $Q \in \bN$ and $\del \in (0,1/2]$, and let $\eps > 0$. Then
\begin{align*}
&|\# \{ (q_1, q_2) \in \bN^2:
\langle q_1 \alp_1 + q_2 \alp_2
\rangle < \del, \quad
q_1, q_2 \le Q \} - 2 \del Q^2|
\\
&\le \eps \del Q^2 + 
O_\eps(\del^{1-\ome} \log^2 (1/\del)).
\end{align*}
\end{lemma}

Applying this to \eqref{delta} with \[
(\alp_1, \alp_2) = (\pm a, b),
\qquad
\ome \in (\ome^\times(a,b), 4),
\qquad
Q \asymp 2^t,
\qquad
\eps = 1,
\]
and bounding by $2^t$ the number of solutions with $p_2 = 0$, furnishes
\begin{equation}
\label{NtmBound}
N(t,m) \ll \del 2^{2t} + \del^{1-\ome} \log^2(1/\del) + 2^t.
\end{equation}
Note that $\del \asymp 2^{|m|} \sqrt{\psi(2^t)}$. By \eqref{BVassumption} and \eqref{tdelta}, we have 
\[
\frac1{t^{3/2} 2^{t/2}} \ll \del \ll \frac1t,
\]
so $2^t \ll \del 2^{2t}$. As $\ome < 4$, now
\[
\del^{1-\ome} \log^2(1/\del) \ll \del 2^{2t}.
\]
Inserting this into \eqref{NtmBound}, we arrive at the conclusion of Corollary \ref{counting}, with the hypothesis 
$
\ome^\times(a,b) < 4
$
in lieu of the hypothesis 
$\ome^*(a,b) < 5$. We thus obtain Theorem \ref{ConvergenceVariant}.


\section{Logarithm laws for lines in homogeneous spaces}
\label{sec-proof-log-law-lines}

In this section, we prove Corollary \ref{cor:log-law-line}. Here we recall \eqref{DeltaDefinition} and \eqref{DeltaFormula}.
\begin{proof}[Proof of Corollary \ref{cor:log-law-line}]
\par Let us take $o = [e]$. For $x \in [L_{a,b}]$, we only consider $(t,s) \in Q_1$ such that $\Delta(a(t,s)x) \geq L$, since the other values of $a(t,s)x$ are trapped in a fixed compact subset and do not contribute to the limit.
\par By our discussion in \S \ref{subsec-DA-to-HD}, Theorem \ref{thm:mda-result} implies that for almost every $ x \in [L_{a,b}]$ we have
\[ \Delta(a(t,s) x) \ge - \log ( (s+t)^{-2/3}) = \frac{2}{3} \log (s+t) \]
for some subsequence of $(t, s) \in \R_{>0}^2$ with $s+t \to +\infty$. Therefore
\[ \limsup_{\substack{\bt \in Q_1 \\ \|\vv t\| \to \infty}} \frac{\Delta(a(\vv t)x)}{\log \|\vv t\|} \geq \frac{2}{3}.\]
In the same way, Theorems \ref{ConvergenceResult} and \ref{ConvergenceVariant} imply that for almost every $x \in [L_{a,b}]$ we have
\[ \limsup_{\substack{\bt \in Q_1 \\ \|\vv t\| \to \infty}} \frac{\Delta(a(\vv t)x)}{\log \|\vv t\|} \le \kappa \]
for any $\kappa > \frac{2}{3}$. This proves \eqref{equ:log-law-1}.
By \eqref{DeltaFormula}, there exists constants $C_2 > C_1 > 0$ such that 
 \begin{equation}
 \label{equ:chain-of-inequality}
  C_1 \frac{\Delta(a(\vv t)x)}{\log \|\vv t\|} \le \frac{d(a(\vv t)x,[e])}{\log \|\vv t\|} \le C_2 \frac{\Delta(a(\vv t)x)}{\log \|\vv t\|}.  
  \end{equation}
We complete the proof by taking $\displaystyle \limsup_{\substack{\bt \in Q_1 \\ \|\vv t\| \to \infty}}$ in the chain of inequalities above.
\end{proof}
\begin{remark} \label{KhintchineRemark}
Using Khintchine's theorem \cite[\S 1.2.2]{BRV} in place of Theorem~\ref{thm:mda-result}, we can still obtain a positive lower bound on 
\[ \limsup_{\substack{\bt \in Q_1 \\ \|\vv t\| \to \infty}} \frac{d(a(\vv t)x,[e])}{\log \|\vv t\|}.  \]
Indeed, Khintchine's theorem gives 
\[ \liminf_{n \to \infty} n (\log n) \langle n\alpha \rangle = 0 \]
for almost all $\alpha \in \R$, so for almost all $(\alpha, \beta) \in L_{a, b}$ we have \eqref{Khintchine}. By our discussion in \S \ref{subsec-DA-to-HD}, this implies that for almost every $x \in [L_{a,b}]$ we have 
\[ \limsup_{\substack{\bt \in Q_1 \\ \|\vv t\| \to \infty}} \frac{\Delta(a(\vv t)x)}{\log \|\vv t\|} \geq \frac{1}{3}.\]
We obtain a positive constant lower bound on 
\[ \limsup_{\substack{\bt \in Q_1 \\ \|\vv t\| \to \infty}} \frac{d(a(\vv t)x,[e])}{\log \|\vv t\|}\] 
by taking $\displaystyle \limsup_{\substack{\bt \in Q_1 \\ \|\vv t\| \to \infty}}$  in the left inequality of \eqref{equ:chain-of-inequality}. 

As mentioned in Remark \ref{quadrant}, this argument, though it does not require Theorem \ref{thm:mda-result}, gives rise to a poorer lower bound $E_1$.
\end{remark}

\bibliography{reference}{}
\bibliographystyle{alpha}

\end{document}